\theoremstyle{plain}
\newtheorem{theorem}{Theorem}
\newtheorem{problem}[theorem]{Problem}
\newtheorem{lemma}[theorem]{Lemma}
\theoremstyle{remark}
\newtheorem{remark}{Remark}
\numberwithin{equation}{section}
\renewcommand{\leq}{\leqslant}
\renewcommand{\geq}{\geqslant}
\begin{document}

\title[$\textup{L}^p$ estimates for unimodular multipliers]{Asymptotic behavior of $\textup{L}^p$ estimates for a class of multipliers with homogeneous unimodular symbols}

\author[A. Bulj]{Aleksandar Bulj}
\author[V. Kova\v{c}]{Vjekoslav Kova\v{c}}

\address{Department of Mathematics, Faculty of Science, University of Zagreb, Bijeni\v{c}ka cesta 30, 10000 Zagreb, Croatia}
\email{aleksandar.bulj@math.hr}
\email{vjekovac@math.hr}

\subjclass[2020]{Primary 42B15; 
Secondary 42B20} 

\keywords{Fourier multiplier, singular integral, spherical harmonic}

\begin{abstract}
We study Fourier multiplier operators associated with symbols $\xi\mapsto \exp(\mathbbm{i}\lambda\phi(\xi/|\xi|))$, where $\lambda$ is a real number and $\phi$ is a real-valued $\textup{C}^\infty$ function on the standard unit sphere $\mathbb{S}^{n-1}\subset\mathbb{R}^n$.
For $1<p<\infty$ we investigate asymptotic behavior of norms of these operators on $\textup{L}^p(\mathbb{R}^n)$ as $|\lambda|\to\infty$.
We show that these norms are always $O((p^\ast-1) |\lambda|^{n|1/p-1/2|})$, where $p^\ast$ is the larger number between $p$ and its conjugate exponent. More substantially, we show that this bound is sharp in all even-dimensional Euclidean spaces $\mathbb{R}^n$.
In particular, this gives a negative answer to a question posed by Maz'ya.
Concrete operators that fall into the studied class are the multipliers forming the two-dimen\-sional Riesz group, given by the symbols 
$r\exp(\mathbbm{i}\varphi) \mapsto \exp(\mathbbm{i}\lambda\cos\varphi)$.
We show that their $\textup{L}^p$ norms are comparable to $(p^\ast-1) |\lambda|^{2|1/p-1/2|}$ for large $|\lambda|$, solving affirmatively a problem suggested in the work of Dragi\v{c}evi\'{c}, Petermichl, and Volberg.
\end{abstract}

\maketitle


\section{Introduction}
Consider Fourier multiplier operators $T_{\phi}^{\lambda}$ associated with symbols of the form
\begin{equation*}
m_{\phi}^{\lambda}(\xi) := e^{\mathbbm{i}\lambda\phi(\xi/|\xi|)}; \quad\xi\in\mathbb{R}^n\setminus\{\mathbf{0}\},
\end{equation*}
i.e., $T_{\phi}^{\lambda}$ acts on Schwartz functions $f$ on the Fourier side as
\[ (\widehat{T_{\phi}^{\lambda}f})(\xi) = m_{\phi}^{\lambda}(\xi) \widehat{f}(\xi). \]
Here, $\phi\in\textup{C}^{\infty}(\mathbb{S}^{n-1})$ is a real-valued \emph{phase function} on the unit sphere, while $\lambda$ is a real parameter. 
By \cite[\S III.3.5, Theorem~6]{Stein70book} we know that $T_{\phi}^{\lambda}$ has a representation 
\begin{equation}\label{eq:Trepresentation}
T_{\phi}^{\lambda}f = a_{\phi}^{\lambda} f + S_{\phi}^{\lambda}f,
\end{equation}
where $a_{\phi}^{\lambda}$ is a constant given by
\begin{equation*}
a_{\phi}^{\lambda} := \frac{1}{\sigma_{n-1}(\mathbb{S}^{n-1})} \int_{\mathbb{S}^{n-1}} m_{\phi}^{\lambda}(\xi) \,\textup{d}\sigma_{n-1}(\xi),
\end{equation*}
while $S_{\phi}^{\lambda}$ is a singular integral operator defined as
\begin{equation}\label{eq:kernel}
(S_{\phi}^{\lambda}f)(x) := \lim_{\varepsilon\to0+} \int_{|y|\geq\varepsilon} f(x-y) \,\frac{\Omega_{\phi}^{\lambda}(y/|y|)}{|y|^n} \,\textup{d}y; \quad x\in\mathbb{R}^n
\end{equation}
for some $\Omega_{\phi}^{\lambda}\in\textup{C}^{\infty}(\mathbb{S}^{n-1})$ with mean zero, i.e., $\int_{\mathbb{S}^{n-1}}\Omega_{\phi}^{\lambda}(y)\,\textup{d}\sigma_{n-1}(y)=0$.
Here $\sigma_{n-1}$ denotes the $(n-1)$-dimensional spherical measure.
In our case we clearly have
\begin{equation}\label{eq:coefficient}
|a_{\phi}^{\lambda}| \leq 1.
\end{equation}
By classical results of the Calder\'{o}n--Zygmund theory (see \cite[\S II.4.2, Theorem~3]{Stein70book} or \cite[Theorem~1]{CZ56}) we know that $T_{\phi}^{\lambda}$ extends to a bounded linear operator on $\textup{L}^p(\mathbb{R}^n)$ for every $p\in(1,\infty)$, and the kernel representation \eqref{eq:Trepresentation}--\eqref{eq:kernel} remains valid for every $f\in\textup{L}^p(\mathbb{R}^n)$.

For each $p\in(1,\infty)$ we thus arrived at a one-parameter group of bounded linear operators $(T_{\phi}^{\lambda})_{\lambda\in\mathbb{R}}$ on the Banach space $\textup{L}^p(\mathbb{R}^n)$.
Plancherel's theorem and unimodularity of $m_{\phi}^{\lambda}$ give 
\begin{equation}\label{eq:Plancherel}
\| T_{\phi}^{\lambda} \|_{\textup{L}^2(\mathbb{R}^n)\to\textup{L}^2(\mathbb{R}^n)} = 1,
\end{equation}
while for $p\neq2$ it makes sense to investigate asymptotic behavior of the $\textup{L}^p$ norms of $T_{\phi}^{\lambda}$ as $|\lambda|\to\infty$.
The present paper is motivated in part by the following question by Vladimir Maz'ya, formulated as Problem~15 on his list of $75$ open problems \cite{Mazya75problems}.

\begin{problem}[from {\cite[Subsection~4.2]{Mazya75problems}}]
\label{prob:Mazya}
Prove or disprove the estimate
\begin{equation}\label{eq:Mazyaconj}
\| T_{\phi}^{\lambda} \|_{\textup{L}^p(\mathbb{R}^n)\to\textup{L}^p(\mathbb{R}^n)} \leq C_{n,p,\phi} |\lambda|^{(n-1)|1/p-1/2|},
\end{equation}
where $|\lambda|\geq1$ and $1<p<\infty$, while the constant $C_{n,p,\phi}$ depends on $n$, $p$, and $\phi$. 
\end{problem}

In Theorem~\ref{thm:main} below we find the largest possible growth in $|\lambda|$ of the $\textup{L}^p$ norms of multiplier operators $T_{\phi}^{\lambda}$ in every even number of dimensions $n$.
It will turn out that the answer to Problem~\ref{prob:Mazya} is negative in all even-dimensional Euclidean spaces $\mathbb{R}^n$.
Moreover, we will also be concerned with sharp dependence of the constant $C_{n,p,\phi}$ on the exponent $p$.

The origins of Problem~\ref{prob:Mazya} trace back to the papers by Maz'ya and Ha\u{\i}kin \cite{MH69,MH76} on rather general multiplier theorems. 
Specific operators $T_{\phi}^{\lambda}$ with the particular phase
\begin{equation}\label{eq:RieszSobolev}
\phi(\xi) = \xi_1; \quad\xi=(\xi_1,\ldots,\xi_n)\in\mathbb{S}^{n-1}
\end{equation}
appear in the analysis of the Navier--Stokes equations; see \cite[\S2]{GIMM06}, \cite[\S4]{GIMM07}, \cite[Eq.~(1.3)]{GS13}, or \cite[Eq.~(23)]{FM12}.
This phase leads to a one-parameter uniformly continuous operator group $(T_{\phi}^{\lambda})_{\lambda\in\mathbb{R}}$ on every $\textup{L}^p(\mathbb{R}^n)$, $1<p<\infty$, called the \emph{Riesz group}.
Its infinitesimal generator is simply the \emph{Riesz transform},
\[ (\widehat{R_1 f})(\xi) = - \mathbbm{i} \frac{\xi_1}{|\xi|} \widehat{f}(\xi); \quad \xi=(\xi_1,\ldots,\xi_n)\in\mathbb{R}^n\setminus\{\mathbf{0}\}. \]
If $n=2$, then \eqref{eq:RieszSobolev} becomes simply the phase $\phi(e^{\mathbbm{i}\varphi}) = \cos\varphi$, which yields the two-dimensional symbol
\begin{equation}\label{eq:cossymbol}
m_{\cos}^{\lambda}(r e^{\mathbbm{i}\varphi}) := e^{\mathbbm{i}\lambda\cos\varphi}; \quad r\in(0,\infty),\ \varphi\in\mathbb{R}.
\end{equation}

One-dimensional case of estimate \eqref{eq:Mazyaconj} is easily seen to hold, as $T_{\phi}^{\lambda}$ is always a bounded linear combination of the identity and the Hilbert transform, so it satisfies $\textup{L}^p$ estimates that are independent of $\lambda$.
In higher dimensions, the H\"{o}rmander-Mihlin theorem (see \cite[Theorem~6.2.7]{Gra14book}) gives a weak $\textup{L}^1$ bound
\[ \| T_{\phi}^{\lambda} \|_{\textup{L}^1(\mathbb{R}^n)\to\textup{L}^{1,\infty}(\mathbb{R}^n)} \leq C_{n,\phi} \,|\lambda|^{\lfloor n/2\rfloor+1}, \] 
which can then be interpolated with the $\textup{L}^2$ identity \eqref{eq:Plancherel} and dualized to deduce
\begin{equation}\label{eq:cheapHMest}
\| T_{\phi}^{\lambda} \|_{\textup{L}^p(\mathbb{R}^n)\to\textup{L}^p(\mathbb{R}^n)} \leq C_{n,\phi,p} \,|\lambda|^{(2\lfloor n/2\rfloor+2)|1/p-1/2|} . 
\end{equation}
This makes one suspect that the sharp exponent of $|\lambda|$ on the average grows by $|1/p-1/2|$ as we increase the number of dimensions $n$ by $1$.
Thus, inequality \eqref{eq:Mazyaconj} is actually a reasonable guess. 

\smallskip
Let us now formulate the main result of this paper.
For every $p\in(1,\infty)$ we denote $p^{\ast}:=\max\{p,p/(p-1)\}$.

\begin{theorem}\label{thm:main}
\begin{enumerate}[(a)]
\item\label{part:thmparta}
Fix an integer $n\geq2$ and a real-valued phase function $\phi\in\textup{C}^{\infty}(\mathbb{S}^{n-1})$.
There is a finite constant $C_{n,\phi}$ such that for every exponent $p\in(1,\infty)$ and every $\lambda\in\mathbb{R}$ satisfying $|\lambda|\geq1$ we have
\begin{equation}\label{eq:mainupper}
\|T_{\phi}^{\lambda}\|_{\textup{L}^p(\mathbb{R}^n)\to\textup{L}^p(\mathbb{R}^n)} \leq C_{n,\phi} \,(p^{\ast}-1) \,|\lambda|^{n|1/p-1/2|}
\end{equation}
and
\begin{equation}\label{eq:weakupper}
\|T_{\phi}^{\lambda}\|_{\textup{L}^1(\mathbb{R}^n)\to\textup{L}^{1,\infty}(\mathbb{R}^n)} \leq C_{n,\phi} \,|\lambda|^{n/2}.
\end{equation}
\item\label{part:thmpartb}
Fix an \underline{even} integer $n\geq2$.
There exist a real-valued phase function $\phi\in\textup{C}^{\infty}(\mathbb{S}^{n-1})$ and a constant $c_{n,\phi}>0$ such that for every exponent $p\in(1,\infty)$ and every nonzero integer $k$ we have
\begin{equation}\label{eq:mainlower}
\|T_{\phi}^{k}\|_{\textup{L}^p(\mathbb{R}^n)\to\textup{L}^p(\mathbb{R}^n)} \geq c_{n,\phi} \,(p^{\ast}-1) \,|k|^{n|1/p-1/2|}
\end{equation}
and
\begin{equation}\label{eq:weaklower}
\|T_{\phi}^{k}\|_{\textup{L}^1(\mathbb{R}^n)\to\textup{L}^{1,\infty}(\mathbb{R}^n)} \geq c_{n,\phi} \,|k|^{n/2}.
\end{equation}
\end{enumerate}
\end{theorem}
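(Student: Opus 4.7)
The plan is to reduce both bounds to sharp $\textup{L}^p$ estimates for Calder\'{o}n--Zygmund operators associated to individual spherical harmonics, combined with control of the spherical-harmonic expansion of the phase-modulated symbol.

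For part~(\ref{part:thmparta}), I would first use the decomposition~\eqref{eq:Trepresentation} together with the trivial bound~\eqref{eq:coefficient} to reduce to estimating $S_{\phi}^{\lambda}$. Expand the restriction of $e^{\mathbbm{i}\lambda\phi(\omega)}$ to $\mathbb{S}^{n-1}$ in spherical harmonics,
\[
e^{\mathbbm{i}\lambda\phi(\omega)} = \sum_{k=0}^{\infty} Y_{k}^{\lambda}(\omega), \qquad Y_{k}^{\lambda}\in\mathcal{H}_{k},
\]
so that via the Funk--Hecke identity, for each $k\geq 1$, the multiplier with symbol $Y_{k}^{\lambda}(\xi/|\xi|)$ is (up to an explicit Funk--Hecke constant) a Calder\'{o}n--Zygmund singular integral with kernel $Y_{k}^{\lambda}(y/|y|)\,|y|^{-n}$. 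Two ingredients then drive the argument. The first is decay of coefficients: since $\phi$ is smooth and $\Delta_{\mathbb{S}^{n-1}}$ acts on $\mathcal{H}_{k}$ with eigenvalue $-k(k+n-2)$, iterated integration by parts gives $\|Y_{k}^{\lambda}\|_{\textup{L}^{2}(\mathbb{S}^{n-1})}\leq C_{m,\phi}(1+|\lambda|)^{2m}k^{-2m}$ for every $m\geq 0$, effectively localizing the sum to $k\lesssim|\lambda|$. The second is a sharp $\textup{L}^{p}$ bound for individual spherical-harmonic singular integrals, exhibiting both polynomial growth in $k$ and the correct $(p^{\ast}-1)$ dependence on $p$, accessible through the Bellman-function and sparse-domination framework (cf.\ the Dragi\v{c}evi\'{c}--Petermichl--Volberg circle referenced above). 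Summing over $k\lesssim|\lambda|$ with multiplicity $\dim\mathcal{H}_{k}\asymp k^{n-2}$ produces the claimed growth $|\lambda|^{n|1/p-1/2|}$ in~\eqref{eq:mainupper}. The weak-type bound~\eqref{eq:weakupper} follows analogously, using an $\textup{L}^{1}\to\textup{L}^{1,\infty}$ endpoint for each spherical-harmonic piece in place of the $(p^{\ast}-1)$-sharp $\textup{L}^{p}$ estimate.

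For part~(\ref{part:thmpartb}), I would exhibit a single phase $\phi$ whose modulated symbol $e^{\mathbbm{i}k\phi}$ concentrates a definite fraction of its spherical-harmonic mass at degrees comparable to $|k|$. The paradigm is $n=2$, where $\phi(e^{\mathbbm{i}\varphi})=\cos\varphi$ makes the Fourier coefficients of $e^{\mathbbm{i}k\cos\varphi}$ exactly the Bessel functions $J_{\ell}(k)$; the classical asymptotics $|J_{\ell}(k)|\asymp |k|^{-1/2}$ uniformly for $|\ell|\leq(1-\delta)|k|$ pack total $\textup{L}^{2}$-mass $\asymp|k|^{1/2}$ into a saturated frequency band, and this is what ultimately forces the $|k|^{n|1/p-1/2|}$ growth. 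For even $n\geq 4$ I would take a suitable (e.g.~linear) extension and use the corresponding higher-dimensional Bessel/Gegenbauer asymptotics; the evenness restriction enters through the parity of the index in the relevant recursion, which is what produces clean closed-form asymptotics rather than the oscillatory cancellations that occur in odd dimensions. The $(p^{\ast}-1)$ factor in~\eqref{eq:mainlower} would be extracted by testing against a carefully chosen function---essentially a Khinchine-style lacunary superposition built at the saturated spherical-harmonic scale---whose response under a single spherical-harmonic singular integral is already known to blow up like $(p^{\ast}-1)$. For~\eqref{eq:weaklower}, a sharp-cutoff test function combined with the same saturating construction should suffice.

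The hardest step is almost certainly the sharp spherical-harmonic multiplier inequality needed in part~(\ref{part:thmparta}): one must obtain the correct polynomial rate in $k$ \emph{simultaneously} with the sharp $(p^{\ast}-1)$ dependence on $p$. Sparse-domination arguments deliver the right $p$-dependence but are typically too generous in $k$, while classical Calder\'{o}n--Zygmund interpolation gives the correct $k$-rate at the expense of losing $p$-sharpness. Reconciling the two, probably by decomposing each spherical-harmonic singular integral into a controlled family of martingale-like or paraproduct-type operators for which both dependences can be tracked at once, is where the real work of part~(\ref{part:thmparta}) should lie.
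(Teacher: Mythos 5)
Your part~(a) contains a quantitative gap that already shows up in $n=2$. Decomposing $e^{\mathbbm{i}\lambda\phi}=\sum_k Y_k^{\lambda}$ and summing operator norms of the individual spherical-harmonic multipliers by the triangle inequality cannot give \eqref{eq:mainupper}: for $\phi(e^{\mathbbm{i}\varphi})=\cos\varphi$ the coefficients are Bessel functions with $|J_k(\lambda)|\lesssim\lambda^{-1/2}$ for $k\lesssim\lambda$, and even granting the sharp per-harmonic bound $\|T_{Y_k^{\lambda}}\|_{\textup{L}^p\to\textup{L}^p}\sim|J_k(\lambda)|(p^{\ast}-1)k^{2|1/p-1/2|}$, the sum over $k\lesssim\lambda$ is of order $(p^{\ast}-1)\lambda^{1/2+2|1/p-1/2|}$, overshooting the target by $\lambda^{1/2}$ (and by worse powers in higher dimensions once you also sum over a basis of $\mathcal{H}_k$). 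The point is that the decomposition must be summed in an $\ell^2$ (Parseval) sense, not an $\ell^1$ sense. The paper does exactly this, but at the level of the kernel rather than the operator: it invokes Seeger's rough-kernel theorem (Theorem~\ref{thm:Seeger}), whose hypothesis is controlled by $\|\Omega_{\phi}^{\lambda}\|_{\textup{L}^2(\mathbb{S}^{n-1})}$ alone; by \eqref{eq:FT_general_SW} and \eqref{eq:gammaasympt} this $\textup{L}^2$ norm is $\big(\sum_j\gamma_{n,j,n}^2\|Y_j\|_{\textup{L}^2}^2\big)^{1/2}\lesssim\big(\sum_j j^{n}\|Y_j\|_{\textup{L}^2}^2\big)^{1/2}$, which is then bounded by $\lambda^{n/2}$ using powers of the spherical Laplacean applied to $e^{\mathbbm{i}\lambda\phi}$. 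The strong bound \eqref{eq:mainupper} with the sharp $(p^{\ast}-1)$ then comes for free from Marcinkiewicz interpolation against \eqref{eq:Plancherel} plus duality; no ``sharp per-harmonic $\textup{L}^p$ estimate with simultaneous $k$- and $p$-dependence'' is ever needed, which is fortunate since the one you posit is itself unproved and is essentially the hard step you acknowledge you cannot reconcile.

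Part~(b) is also not yet a proof. The pointwise claim $|J_{\ell}(k)|\asymp|k|^{-1/2}$ uniformly for $|\ell|\leq(1-\delta)|k|$ is false ($J_{\ell}(k)$ oscillates and vanishes in that range; only averaged versions such as $\sum_{l}l^2J_{2l}(\lambda)^2\gtrsim\lambda^2$ hold, which is how the paper proceeds in Section~\ref{sec:proofofcos}). More importantly, for $n\geq4$ you do not actually specify a phase or a testing pair, and the mechanism you assign to evenness (``parity of the index in the relevant recursion'') is not the one at work: evenness is used because $\mathbb{R}^n=(\mathbb{R}^2)^{n/2}$ splits into two-dimensional blocks each carrying a circle action, so one can take a smoothed version of $\prod_{i}\big((\xi_{2i-1}+\mathbbm{i}\xi_{2i})/|\xi_{2i-1}+\mathbbm{i}\xi_{2i}|\big)^{k}$, i.e.\ the phase $\varphi_1+\cdots+\varphi_{n/2}$ in the coordinates \eqref{eq:PsiJacobian}. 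The sharp $(p^{\ast}-1)$ and the sharp power of $|k|$ are then extracted not from a lacunary/Khinchine construction but from the duality scheme of Theorem~\ref{thm:generallower}: one tests against truncated homogeneous functions built from Bochner's relation \eqref{eq:FT_on_Riesz} (Lemma~\ref{lm:lowercomp}), and the genuinely hard step is the $\textup{L}^{\infty}$ bound $\|u^{(k)}\|_{\textup{L}^{\infty}}\sim k^{-n/2}$ of Lemma~\ref{lm:computeu}, proved through explicit Bessel-integral computations. Your sketch contains no substitute for that step.
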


In particular, notice that \eqref{eq:mainupper} improves the ``cheap'' bound \eqref{eq:cheapHMest}, while \eqref{eq:mainlower} disproves the conjectured estimate \eqref{eq:Mazyaconj} in all even dimensions $n\geq2$.
Let us remark that \eqref{eq:mainlower} easily extends to non-integer values of $k$ using the group property of the operators $T_{\phi}^{\lambda}$, but at the cost of possibly losing sharp dependence on $p$; cf.\@ the comments in \cite{CDK21}.
Techniques that we use also allow us to obtain weak $\textup{L}^1$ estimates \eqref{eq:weakupper} and \eqref{eq:weaklower}.

While \eqref{eq:weakupper} and \eqref{eq:Plancherel} will immediately imply \eqref{eq:mainupper}, there are also other ways to establish upper $\textup{L}^p$ bounds of that form.
The number $2\lfloor n/2\rfloor+2$ in the exponent on the right hand side of \eqref{eq:cheapHMest} can be easily lowered to anything strictly larger than $n$ by considering more sophisticated versions of the H\"{o}rmander--Mihlin theorem, such as those in \cite{CT77,Seg90,GS19}, but the optimal exponent is trickier.
Shortly after the first preprint of the present paper was made public, Stolyarov \cite{Sto22} showed us an interpolation argument that gives the same sharp exponent in \eqref{eq:mainupper}. However this argument does not seem to give the sharp order of the constant in terms of $p$ and it misses the weak endpoint \eqref{eq:weakupper}.

Moreover, Stolyarov \cite{Sto22} independently showed lower estimates
\[ \|T_{\phi}^{\lambda}\|_{\textup{L}^p(\mathbb{R}^n)\to\textup{L}^p(\mathbb{R}^n)} \geq c_{n,\phi,p} \,|\lambda|^{n|1/p-1/2|} \]
for a particular choice of the phase $\phi$ in both even and odd dimensions $n\geq2$ using different techniques from ours. 
Just as for the upper bound, we do not see how to modify his approach to give sharp dependence on $p$ of the constant $c_{n,\phi,p}$ in the above lower bound.

\smallskip
Two-dimensional multiplier operators $T_{\cos}^{\lambda}$ with very concrete symbols \eqref{eq:cossymbol} were already studied by Dragi\v{c}evi\'{c}, Petermichl, and Volberg in \cite{DPV06}. Their paper, which might have been overlooked in \cite{Mazya75problems}, claims bounds of the form
\begin{equation}\label{eq:DPVclaim}
c_{\delta} \,(p^{\ast}-1) \,|k|^{2|1/p-1/2|-\delta} \leq \|T_{\cos}^{k}\|_{\textup{L}^p(\mathbb{R}^2)\to\textup{L}^p(\mathbb{R}^2)} \leq C  \,(p^{\ast}-1) \,|k|^{2|1/p-1/2|}
\end{equation}
for every $\delta>0$, every $p\in(1,\infty)$, and every nonzero integer $k$.
The lower bound in \eqref{eq:DPVclaim} is sketched in the proof of \cite[Theorem~6]{DPV06} and it already disproves the estimate \eqref{eq:Mazyaconj} in $n=2$ dimensions.
Since the authors of \cite{DPV06} remark that they ``do not know how to get rid of $\delta$'' in \eqref{eq:DPVclaim}, an optimal growth of the $\textup{L}^p$ norms of $T_{\cos}^{k}$ is an interesting separate problem, which we fully address in the following theorem.

\begin{theorem}\label{thm:cos}
Let $T_{\cos}^{\lambda}$ be the Fourier multiplier operator associated with the symbol \eqref{eq:cossymbol}.
There exist constants $0<c\leq C<\infty$ such that for every exponent $p\in(1,\infty)$ and every $\lambda\in\mathbb{R}$ satisfying $|\lambda|\geq1$ we have
\begin{equation}\label{eq:cosstrong}
c \,(p^{\ast}-1) \,|\lambda|^{2|1/p-1/2|} \leq \|T_{\cos}^{\lambda}\|_{\textup{L}^p(\mathbb{R}^2)\to\textup{L}^p(\mathbb{R}^2)} \leq C  \,(p^{\ast}-1) \,|\lambda|^{2|1/p-1/2|}
\end{equation}
and
\begin{equation}\label{eq:cosweak}
c \,|\lambda| \leq \|T_{\cos}^{\lambda}\|_{\textup{L}^1(\mathbb{R}^2)\to\textup{L}^{1,\infty}(\mathbb{R}^2)} \leq C \,|\lambda|.
\end{equation}
\end{theorem}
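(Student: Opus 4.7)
The upper bounds \eqref{eq:cosstrong} and \eqref{eq:cosweak} are immediate special cases of Theorem~\ref{thm:main}(a): take $n=2$ and $\phi(\xi_1,\xi_2)=\xi_1$ so that $\phi(e^{\mathbbm{i}\varphi})=\cos\varphi$, and the exponents $n|1/p-1/2|$ and $n/2$ reduce to $2|1/p-1/2|$ and $1$ respectively. The substance of Theorem~\ref{thm:cos} is therefore the matching lower bounds.

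For the lower bounds I would first handle the nonzero integer case $\lambda=k$ by deducing it from Theorem~\ref{thm:main}(b) applied in $n=2$, after verifying that the ``worst'' phase produced there can be taken to equal $\cos\varphi$. This should be natural: in two dimensions the spherical harmonics are simply the circular harmonics $e^{\mathbbm{i} j\varphi}$, and the Jacobi--Anger identity
\begin{equation*}
e^{\mathbbm{i} k\cos\varphi} \;=\; \sum_{j\in\mathbb{Z}} \mathbbm{i}^{j}\, J_{j}(k)\, e^{\mathbbm{i} j\varphi}
\end{equation*}
distributes the symbol across harmonics of all degrees $|j|\lesssim|k|$, with $|J_{j}(k)|$ of order $|k|^{-1/2}$ throughout the oscillatory regime. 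This is precisely the scenario in which the argument of part~(b) extracts its sharp lower bound from the high-degree angular components. Granted this reduction, Theorem~\ref{thm:main}(b) delivers \eqref{eq:cosstrong} and \eqref{eq:cosweak} whenever $\lambda$ is a nonzero integer.

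The principal technical point is then to extend the bounds from integers to all real $\lambda$ with $|\lambda|\geq 1$ without sacrificing the sharp $(p^{\ast}-1)$ dependence on $p$. The shortcut via the group law --- writing $T_{\cos}^{\lambda}=T_{\cos}^{\lfloor\lambda\rfloor}\circ T_{\cos}^{\lambda-\lfloor\lambda\rfloor}$ and using the upper bound on the fractional-part factor to invert --- yields the correct $|\lambda|^{2|1/p-1/2|}$ scaling but loses exactly one factor of $(p^{\ast}-1)$, as the authors flag in the remark after Theorem~\ref{thm:main}. To avoid this loss I would rerun the part~(b) witness construction directly with the real parameter $\lambda$: pick the circular-harmonic degree $j=j(\lambda)$ so that $|\lambda|-j$ lies well inside the oscillatory Bessel regime, say $|\lambda|^{1/3}\ll|\lambda|-j\ll|\lambda|$, where $|J_{j}(\lambda)|$ varies smoothly in $\lambda$ with stable magnitude comparable to $|\lambda|^{-1/2}$, so the quantitative inequalities of the integer case transfer verbatim. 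The main obstacle is then to show that the remaining modes of the Jacobi--Anger expansion --- those from the Airy transition band $|j-|\lambda||\lesssim|\lambda|^{1/3}$ and from the rapidly decaying tail $|j|>|\lambda|+|\lambda|^{1/3}$ --- contribute at most a constant multiple of the main term, and do not absorb the $(p^{\ast}-1)$ factor; this requires combining uniform Bessel asymptotics with sharp $(p^{\ast}-1)$-scaling $\textup{L}^{p}$ estimates for two-dimensional homogeneous multipliers $e^{\mathbbm{i} j\varphi}$, together with a parallel construction on the $\textup{L}^{1}\to\textup{L}^{1,\infty}$ scale to secure \eqref{eq:cosweak}.
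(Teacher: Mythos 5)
The upper bounds are indeed immediate from Theorem~\ref{thm:main}\eqref{part:thmparta}, but your treatment of the lower bounds has a genuine gap. First, the lower bound for $T_{\cos}^{k}$ is \emph{not} a consequence of Theorem~\ref{thm:main}\eqref{part:thmpartb}: that statement only asserts the existence of \emph{some} bad phase, and the phase actually constructed there in $n=2$ is the (smoothed) angle function $\phi(e^{\mathbbm{i}\varphi})=\varphi$, whose symbol is a single circular harmonic $e^{\mathbbm{i}k\varphi}$ --- a completely different object from $e^{\mathbbm{i}k\cos\varphi}$. There is no ``verification that the worst phase can be taken to equal $\cos\varphi$'' to be done; one must instead apply the general lower-bound machinery of Theorem~\ref{thm:generallower} directly to the symbol $m_{\cos}^{\lambda}$, and this requires producing explicit test functions $u$ and $v$ together with two quantitative estimates that your sketch never supplies: a bound $\|u\|_{\textup{L}^q(\mathbb{S}^1)}\lesssim 1$ uniform in $\lambda$ and $q$, and a lower bound $|\langle m_{\cos}^{\lambda},v\rangle_{\textup{L}^2(\mathbb{S}^1)}|\gtrsim\lambda^{2/p-1}$. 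Your heuristic ``$|J_j(\lambda)|\sim\lambda^{-1/2}$ in the oscillatory regime'' gives neither. The paper's resolution is to take only the even-order part of the Jacobi--Anger expansion, $u^{(\lambda)}(e^{\mathbbm{i}\varphi})=\cos(\lambda\sin\varphi)-J_0(\lambda)$, which is trivially bounded by $2$ in $\textup{L}^\infty$, and then to prove $\sum_{l}l^{2/p-1}J_{2l}(\lambda)^2\gtrsim\lambda^{2/p-1}$ by differentiating the Fourier series term by term, applying Parseval to get $\sum_l l^2J_{2l}(\lambda)^2\gtrsim\lambda^2$ and $\sum_l l^4J_{2l}(\lambda)^2\lesssim\lambda^4$, and interpolating between these by H\"older. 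These steps are the substance of the proof and are missing from your argument.

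Second, the integer-versus-real issue you spend the final paragraph on is a non-issue once the argument is run correctly: the Jacobi--Anger expansion $e^{\mathbbm{i}\lambda\cos\varphi}=J_0(\lambda)+2\sum_{j\geq1}\mathbbm{i}^jJ_j(\lambda)\cos j\varphi$ and all of the Bessel-sum estimates above hold for every real $\lambda\geq1$ (the orders $j$ are integers regardless of $\lambda$), so Theorem~\ref{thm:generallower} applies directly for all $|\lambda|\geq1$ with no interpolation between integer values, no uniform Bessel asymptotics, and no analysis of the Airy transition band. Your proposed machinery there is both unnecessary and left unexecuted.
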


\smallskip
The upper estimates in Theorem~\ref{thm:main}, and thus also those in Theorem~\ref{thm:cos}, will be established in Section~\ref{sec:upper}. We use weak $\textup{L}^1$ estimates for singular integrals in terms of the size of the kernel alone and no smoothness assumptions imposed; see the series of papers by Christ and Rubio de Francia \cite{CR88}, Hofmann \cite{Hof88}, Seeger \cite{Seeger96}, and Tao \cite{Tao99}.
That way we only need to bound $\|\Omega_{\phi}^{\lambda}\|_{\textup{L}^2(\mathbb{S}^{n-1})}$ for the singular kernel appearing in the singular integral part \eqref{eq:kernel}. That is achieved by generalizing the two-dimensional approach of Dragi\v{c}evi\'{c}, Petermichl, and Volberg \cite[Theorem~5]{DPV06} to higher dimensions: replacing the Fourier series expansion by the expansion into spherical harmonics, and replacing one-dimensional derivatives with powers of the spherical Laplacean.

\smallskip
The lower estimates for the $\textup{L}^p$ norms of operators $T_{\phi}^{\lambda}$ are more substantial results of this paper. To some extent we generalize an approach by Carbonaro, Dragi\v{c}evi\'{c}, and one of the present authors \cite{CDK21}. That paper was only concerned with asymptotics for powers of a particular two-dimensional Fourier multiplier with the complex symbol $\xi\mapsto\xi/|\xi|$.
Here we develop a convenient way of bounding $\textup{L}^p$ norms of more general Fourier multipliers from below by merely choosing two particular spherical functions, $u$ and $v$, with mutually related expansions into spherical harmonics.
Let us already state the result, referring the reader to Subsection~\ref{subsec:spherical} for a review of spherical harmonics.
For an integer $j\geq0$ and a real parameter $\alpha\in[0,n]$ denote the constants
\begin{equation}\label{eq:defcoeffgamma}
\gamma_{n,j,\alpha} := \pi^{n/2-\alpha} \frac{\Gamma((j+\alpha)/2)}{\Gamma((j+n-\alpha)/2)}.
\end{equation}

\begin{theorem}\label{thm:generallower}
Let $p\in[1,2]$ and $q\in[2,\infty]$ be mutually conjugate exponents and let $m$ be a bounded homogeneous Borel-measurable symbol on $\mathbb{R}^n\setminus\{\mathbf{0}\}$. Take a sequence $(Y_j)_{j=0}^{\infty}$ such that:
\begin{enumerate}[(a)]
\item\label{it:propa} 
each $Y_j$ is from the linear space of spherical harmonics on $\mathbb{S}^{n-1}$ of degree $j$;
\item\label{it:propb}  
the series $\sum_{j=0}^{\infty}Y_j$ converges in $\textup{L}^{q}(\mathbb{S}^{n-1})$ to some function $u$;
\item\label{it:propc}  
the orthogonal series $\sum_{j=0}^{\infty}\mathbbm{i}^{-j}\gamma_{n,j,n/p}Y_j$ converges in $\textup{L}^2(\mathbb{S}^{n-1})$ to some function $v$.
\end{enumerate}
If $p>1$, $q<\infty$, then the Fourier multiplier operator $T_m$ associated with $m$ satisfies the bound
\begin{align}
\| T_m \|_{\textup{L}^p(\mathbb{R}^n)\to\textup{L}^p(\mathbb{R}^n)}
& \geq \frac{\gamma_{n,0,n/q}}{\sigma(\mathbb{S}^{n-1})^{1/p}} \frac{|\langle m, v \rangle_{\textup{L}^2(\mathbb{S}^{n-1})}|}{\|u\|_{\textup{L}^{q}(\mathbb{S}^{n-1})}} \label{eq:propmainlowsh} \\
& \geq c_n \,(q-1) \,\frac{|\langle m, v \rangle_{\textup{L}^2(\mathbb{S}^{n-1})}|}{\|u\|_{\textup{L}^{q}(\mathbb{S}^{n-1})}}, \label{eq:propmainlow}
\end{align}
while in the endpoint case $p=1$, $q=\infty$ we have
\begin{equation}\label{eq:propweaklow}
\| T_m \|_{\textup{L}^1(\mathbb{R}^n)\to\textup{L}^{1,\infty}(\mathbb{R}^n)} \geq \frac{c}{n}\, \frac{|\langle m, v \rangle_{\textup{L}^2(\mathbb{S}^{n-1})}|}{\|u\|_{\textup{L}^{\infty}(\mathbb{S}^{n-1})}} .
\end{equation}
Here, $c_n>0$ is a constant depending on $n$, while $c>0$ is an absolute constant.
\end{theorem}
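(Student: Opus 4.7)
My plan is to exhibit natural test functions adapted to the scale-invariance of $T_m$, guided by the classical Stein--Weiss identity for the distributional Fourier transform of homogeneous functions: for $\alpha\in(0,n)$ and a spherical harmonic $Y_j$ of degree $j$,
\[
\mathcal{F}\bigl[|x|^{-\alpha}Y_j(x/|x|)\bigr](\xi)=\mathbbm{i}^{-j}\gamma_{n,j,\alpha}\,|\xi|^{\alpha-n}\,Y_j(\xi/|\xi|).
\]
Summing this over $j$ against hypotheses (\ref{it:propb})--(\ref{it:propc}) identifies the distributional Fourier transform of the formal (non-integrable) function $f_0(x):=|x|^{-n/p}u(x/|x|)$ as $\widehat{f_0}(\xi)=|\xi|^{-n/q}v(\xi/|\xi|)$. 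Consequently $T_mf_0$ is again a homogeneous distribution of the critical degree, of the form $|x|^{-n/p}\tilde{w}(x/|x|)$, and a Parseval-on-the-sphere calculation produces the key algebraic identity: the radial average (i.e., the $Y_0$-coefficient) of $\tilde{w}$ equals $\gamma_{n,0,n/q}\langle m,v\rangle_{\textup{L}^2(\mathbb{S}^{n-1})}/\sigma(\mathbb{S}^{n-1})$.

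For the strong bound \eqref{eq:propmainlowsh} I would regularize $f_0$ by smoothly truncating $|x|$ to a long logarithmic annulus $[1,N]$: set $f(x):=\chi_N(|x|)|x|^{-n/p}u(x/|x|)$ and pair with $g(x):=\chi_N(|x|)|x|^{-n/q}$, the latter being purely radial so as to probe exactly the $Y_0$-mode. Polar integration, followed by H\"{o}lder on $\mathbb{S}^{n-1}$ to convert $\|u\|_{\textup{L}^p(\mathbb{S}^{n-1})}$ into $\sigma(\mathbb{S}^{n-1})^{1/p-1/q}\|u\|_{\textup{L}^q(\mathbb{S}^{n-1})}$, delivers $\|f\|_{\textup{L}^p}\lesssim(\log N)^{1/p}\sigma^{1/p-1/q}\|u\|_{\textup{L}^q(\mathbb{S}^{n-1})}$ and $\|g\|_{\textup{L}^q}\lesssim(\log N)^{1/q}\sigma^{1/q}$. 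I would then compute $\langle T_mf,g\rangle$ via Plancherel and the formal identity above, showing that the bulk contribution equals $(\log N)\gamma_{n,0,n/q}\langle m,v\rangle_{\textup{L}^2(\mathbb{S}^{n-1})}$ while truncation errors remain $O(1)$; the crucial cancellation $1/p+1/q=1$ then makes the powers of $\log N$ collapse when forming the ratio, so $N\to\infty$ yields \eqref{eq:propmainlowsh}. Estimate \eqref{eq:propmainlow} is an immediate consequence via the elementary Gamma-function asymptotics $\Gamma(n/(2q))\sim 2q/n$ as $q\to\infty$, which force $\gamma_{n,0,n/q}/\sigma(\mathbb{S}^{n-1})^{1/p}\gtrsim_{n}(q-1)$ uniformly on $q\in[2,\infty)$.

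For the weak endpoint \eqref{eq:propweaklow} the preceding scheme degenerates, since $\gamma_{n,0,n}=0$ makes the $Y_0$-probe trivial. Instead I would test the operator on a single-scale $\textup{L}^1$-normalized mollifier of $\delta_0$: $T_mf(x)$ then agrees off the shrinking support of $f$ with the Calder\'{o}n--Zygmund kernel $K(x)=|x|^{-n}\Omega(x/|x|)+a\delta_0$ of $T_m$, and a direct polar computation of the distribution function of $|x|^{-n}\Omega(x/|x|)$ yields $\|T_mf\|_{\textup{L}^{1,\infty}}\geq\frac{1-o(1)}{n}\|\Omega\|_{\textup{L}^1(\mathbb{S}^{n-1})}$. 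The Stein--Weiss formula at $\alpha=n$ furnishes the relation $m_j=\mathbbm{i}^{-j}\gamma_{n,j,n}\Omega_j$ for $j\geq 1$, and a spherical Parseval manipulation relates $\|\Omega\|_{\textup{L}^1(\mathbb{S}^{n-1})}$ to the target ratio $|\langle m,v\rangle_{\textup{L}^2(\mathbb{S}^{n-1})}|/\|u\|_{\textup{L}^\infty(\mathbb{S}^{n-1})}$ up to an absolute constant.

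The principal obstacle will be the error analysis in the strong case: because $f_0\notin\textup{L}^p$ I cannot invoke $\textup{L}^p$-boundedness of $T_m$ on $f_0$ directly, so I must verify that the boundary layers of the truncation (near $|x|\sim 1$ and $|x|\sim N$) contribute only $O(1)$ to $\langle T_mf,g\rangle$ rather than the $O(\log N)$ that would otherwise swamp the main term. I plan to control this via off-diagonal Calder\'{o}n--Zygmund kernel estimates for $T_m$, which follow from the representation \eqref{eq:Trepresentation}--\eqref{eq:kernel} and the smoothness of the kernel $\Omega_\phi^\lambda$ off the diagonal, thereby restricting the interaction of the boundary regions with the bulk to a bounded-scale contribution.
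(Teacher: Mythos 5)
Your overall strategy---duality against truncated homogeneous test functions governed by the Stein--Weiss/Bochner formula, with the powers of $\log N$ cancelling because $1/p+1/q=1$---is exactly the paper's, but your assignment of the two profiles to the $\textup{L}^p$ and $\textup{L}^q$ slots is reversed, and this is not cosmetic because the coefficients are not symmetric under $p\leftrightarrow q$: by \eqref{eq:gammasymmetry} one has $\gamma_{n,j,n/q}=1/\gamma_{n,j,n/p}$. With your $f(x)=\chi_N(|x|)\,|x|^{-n/p}u(x/|x|)$, formula \eqref{eq:FT_on_Riesz} applies with $\alpha=n/q$, so $\widehat{f}$ is modelled on $|\xi|^{-n/q}\sum_j\mathbbm{i}^{-j}\gamma_{n,j,n/q}Y_j(\xi/|\xi|)$, which is \emph{not} $|\xi|^{-n/q}v(\xi/|\xi|)$: the hypothesis defines $v$ with the growing coefficients $\gamma_{n,j,n/p}$, whereas your $\widehat{f}$ carries their decaying reciprocals. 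Likewise your radial $g$ has $\widehat{g}$ modelled on $\gamma_{n,0,n/p}|\xi|^{-n/p}$, and $\gamma_{n,0,n/p}\sim_n p-1\lesssim 1$ rather than the essential large factor $\gamma_{n,0,n/q}\sim_n q-1$. Hence the bulk of $\langle m\widehat{f},\widehat{g}\rangle$ is $(\log N)\,\gamma_{n,0,n/p}\int_{\mathbb{S}^{n-1}}m\cdot\sum_j\mathbbm{i}^{-j}\gamma_{n,j,n/q}Y_j$, not the claimed $(\log N)\,\gamma_{n,0,n/q}\langle m,v\rangle$; as written the argument yields a genuinely weaker inequality that is useless for the intended applications. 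The fix is to swap the roles: put the \emph{radial} profile $|x|^{-n/p}$ in $\textup{L}^p$ (its Fourier transform then contributes $\gamma_{n,0,n/q}$) and $|x|^{-n/q}u(x/|x|)$ in $\textup{L}^q$ (its Fourier transform then carries the coefficients $\gamma_{n,j,n/p}$ defining $v$, and its $\textup{L}^q$ norm is $\|u\|_{\textup{L}^q(\mathbb{S}^{n-1})}(2\log N)^{1/q}$ with no H\"{o}lder loss).

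Two further gaps. First, your error control for the truncation invokes smoothness of the kernel of $T_m$, but the theorem assumes only that $m$ is bounded, homogeneous and Borel-measurable, so no Calder\'{o}n--Zygmund kernel estimates are available. The paper circumvents this by truncating the spherical-harmonic expansion at a finite degree $J$ before passing to limits, and by realizing the test functions as explicit superpositions of Gaussians (Lemma~\ref{lm:lowercomp}), so that both the function and its Fourier transform are within $O(1)$ of the truncated homogeneous profiles in the relevant norms and only $\|m\|_{\textup{L}^\infty}$ enters the error estimate. Second, for the weak endpoint your mollified-delta argument presupposes a kernel representation with $\Omega\in\textup{L}^1(\mathbb{S}^{n-1})$, which the hypotheses do not supply: the series $\sum_j\mathbbm{i}^{j}\gamma_{n,j,n}m_j$ with $\gamma_{n,j,n}\sim_n j^{n/2}$ need not converge for a general bounded $m$. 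The paper instead extrapolates \eqref{eq:propweaklow} from the already-established $\textup{L}^p$ bounds by letting $p\to1+$ through a sharp form of the Marcinkiewicz interpolation theorem, which is also where the factor $1/n$ arises.
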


Theorem~\ref{thm:generallower} combined with some guessing of appropriate spherical functions $u$ and $v$ is a useful tool for proving lower bounds for multipliers with homogeneous unimodular symbols.
In particular, it will be a crucial ingredient in the proof of the part \eqref{part:thmpartb} of Theorem~\ref{thm:main} and in the proof of Theorem~\ref{thm:cos}; see Sections~\ref{sec:thm2lower} and \ref{sec:proofofcos}, respectively.
The proof of Theorem~\ref{thm:generallower} in Section~\ref{sec:lower} will, in turn, build on the approach from \cite[Section~6]{CDK21}, but with additional complications arising from arbitrary symbols and higher dimensions.

Estimate \eqref{eq:propmainlowsh} is tailored to exact constants and we believe that it is, in fact, absolutely sharp for many multipliers.
For instance, if one considers the two-dimensional complex symbol $m(\xi)=\overline{\xi}/\xi$, $\xi\in\mathbb{C}$, then the underlying operator $T_m$ is the Ahlfors--Beurling operator. By choosing $u=m$ on $\mathbb{S}^1$, the inequality \eqref{eq:propmainlowsh} simplifies to $\| T_m \|_{\textup{L}^p(\mathbb{C})\to\textup{L}^p(\mathbb{C})} \geq q-1$, which reproves the result of Lehto \cite{Leh65} and matches the well-known conjecture by Iwaniec \cite{Iwa82} on the exact $\textup{L}^p$ norm of $T_m$.
Estimate \eqref{eq:propmainlowsh} is also believed to be sharp in the case of $m(\xi)=(\xi/|\xi|)^k$, $\xi\in\mathbb{C}$, for an integer $k$; see the paper \cite{CDK21} as this estimate generalizes \cite[Theorem~6.1]{CDK21}.
This potential sharpness of Theorem~\ref{thm:generallower} can be viewed both as a virtue and as a lack of flexibility, by focusing on global and not local properties of the multiplier.
In particular we do not see how to use that theorem to prove lower $\textup{L}^p$ bounds for $T_{\phi}^{\lambda}$ that are simultaneously sharp in $\lambda$ and $p$ in odd dimensions $n\geq3$.


\section{Preliminaries}

\subsection{Notation and terminology}

We use the following variants of the Hardy--Vinogradov and the Bachmann--Landau notations.
Let $A$ and $B$ be two complex functions on a set $X$. 
We write 
\[ A(x) \lesssim_P B(x) \quad\text{and}\quad B(x) \gtrsim_P A(x) \]
if the inequality $|A(x)|\leq C_P |B(x)|$ holds for every $x\in X$, with some finite constant $C_P$ depending on a set of parameters $P$.
Moreover,
\[ A(x) \sim_P B(x) \]
if both $A(x) \lesssim_P B(x)$ and $B(x) \lesssim_P A(x)$ hold.
Next, assume that $A$ and $B$ are, more specifically, complex functions of a single real (or complex) variable $x$ and that $a\in\mathbb{R}\cup\{-\infty,\infty\}$ (or $a\in\mathbb{C}\cup\{\infty\}$) is a fixed point.
We write
\[ A(x) = O_P^{x\to a}\big(B(x)\big) \]
if $\limsup_{x\to a}|A(x)/B(x)|<\infty$ and
\[ A(x) = o_P^{x\to a}\big(B(x)\big) \]
if $\lim_{x\to a}A(x)/B(x)=0$.
Here $P$ in the subscript emphasizes that $A$ and $B$ are also allowed to depend on the parameters from $P$, but the limits need not be uniform in those parameters.

The \emph{Euclidean norm} (i.e., the \emph{$\ell^2$ norm}) on $\mathbb{R}^n$ will be written simply as $x\mapsto|x|$, while the \emph{dot product} (i.e., the standard inner product) of $x,y\in\mathbb{R}^n$ is denoted $x\cdot y$.
The \emph{standard unit sphere} in $\mathbb{R}^n$ is
\[ \mathbb{S}^{n-1} := \{x\in\mathbb{R}^n : |x|=1 \}. \]
The \emph{surface measure} on $\mathbb{S}^{n-1}$, i.e., the $(n-1)$-dimensional \emph{spherical measure}, is the restriction of the $(n-1)$-dimensional Hausdorff measure to Borel subsets of $\mathbb{S}^{n-1}$; it is written as $\sigma_{n-1}$.
The notation for the measure is suppressed whenever the integrals are evaluated with respect to the Lebesgue measure on $\mathbb{R}^n$.
The \emph{Lebesgue norms} $\|\cdot\|_{\textup{L}^p}$ and the \emph{Lebesgue spaces} $\textup{L}^p$ are defined in a standard way; see \cite{Gra14book,Stein70book,SW71Book}. We often denote the measure space in the parentheses, such as $\textup{L}^p(\mathbb{S}^{n-1})$, the underlying measure being understood.
Sometimes, we write the variable in which the $\textup{L}^p$ norm is taken in the subscript, such as $\|f(x,y)\|_{\textup{L}^p_x}$. 

The \emph{imaginary unit} will be denoted by $\mathbbm{i}$.
Any \emph{logarithm} is having $e$ as its base.
We use the notation $\mathbbm{1}_A$ for the \emph{indicator function} (i.e., the \emph{characteristic function}) of a set $A$.
For $x\in\mathbb{R}$ we respectively write $\lfloor x\rfloor$ and $\lceil x\rceil$ for the largest integer $k$ such that $k\leq x$ and the smallest integer $l$ such that $l\geq x$.
It $T\colon X\to Y$ is a linear operator between normed spaces $(X,\|\cdot\|_X)$ and $(Y,\|\cdot\|_Y)$, then we write $\|T\|_{X\to Y}$ for its \emph{operator norm}, defined as
\[ \|T\|_{X\to Y} := \sup_{x\in X,\,\|x\|_X=1} \|Tx\|_{Y}. \]
We say that $p,q\in[1,\infty]$ are \emph{conjugated exponents} if $1/p+1/q=1$ holds.
We have already been writing $p^\ast$ for the larger number between $p\in[1,\infty]$ and its conjugate exponent.

The \emph{Fourier transform} is initially defined for a function $f\in\textup{L}^1(\mathbb{R}^n)$ as $\widehat{f}\colon\mathbb{R}^n\to\mathbb{C}$ given by the formula
\[ \widehat{f}(\xi) := \int_{\mathbb{R}^n} f(x) e^{-2\pi\mathbbm{i} x\cdot\xi} \,\textup{d}x; \quad \xi\in\mathbb{R}^n; \]
note that we are using the widespread normalization from \cite{SW71Book}.
It extends to a unitary operator on $\textup{L}^2(\mathbb{R}^n)$ and also to bounded linear maps from $\textup{L}^p(\mathbb{R}^n)$ to $\textup{L}^q(\mathbb{R}^n)$ for any pair of conjugated exponents $p\in[1,2]$ and $q\in[2,\infty]$.
Moreover, it extends to the space of tempered distributions $F$ via the duality formula: $\widehat{F}(\varphi) := F(\widehat{\varphi})$ for every Schwartz function $\varphi$.

A function $f\colon\mathbb{R}^n\setminus\{0\}\to\mathbb{C}$ is \emph{homogeneous of degree $j$} if
$f(t x) = t^j f(x)$
holds for every $t>0$ and every $x\in\mathbb{R}^n$.
It is simply said to be \emph{homogeneous} if it is homogeneous of degree $0$.
Thus, a polynomial $P$ of $n$ real variables $x_1,\ldots,x_n$ is homogeneous of degree $j$ precisely when it is a linear combination of the monomials $x_1^{k_1}\cdots x_n^{k_n}$ for nonnegative integers $k_1,\ldots,k_n$ adding up to $j$.

\subsection{Properties of spherical harmonics}
\label{subsec:spherical}
For reader's convenience, in this subsection we review several results on spherical harmonics that will be needed later.
Basic properties are taken from the book by Stein and Weiss \cite[Sections~IV.2--IV.4]{SW71Book} and the book by Stein \cite[Section~III.3]{Stein70book}. For more sophisticated $\textup{L}^p$ estimates concerning spherical harmonics we will recall the work of Sogge \cite{Sogge85,Sogge86}.

Throughout the paper we are working in $\mathbb{R}^n$ for a fixed dimension $n\geq2$. Let us also take a nonnegative integer $j$.
Homogeneous polynomials in $n$ variables of degree $j$ that are also harmonic functions on $\mathbb{R}^n$ (i.e., satisfy the Laplace equation) are called \emph{solid spherical harmonics of degree $j$}.
Their restrictions to the sphere $\mathbb{S}^{n-1}$ are called \emph{(surface) spherical harmonics of degree $j$}.
Spherical harmonics of distinct degrees are mutually orthogonal and the whole space $\textup{L}^2(\mathbb{S}^{n-1})$ can be written as an orthogonal sum of (finite dimensional) spaces of spherical harmonics of degrees $j=0,1,2,\ldots$; see \cite[Chapter~IV, Corollaries~2.3 and 2.4]{SW71Book}.

Spherical harmonics play important roles in describing how the Fourier transform acts on many particular types of functions and distributions.
If $P$ is a solid spherical harmonic of degree $j$, then
\begin{equation}\label{eq:FT_on_Gaussians}
f(x) = P(x) e^{-\pi|x|^2} \ \implies\ \widehat{f}(\xi) = \mathbbm{i}^{-j} P(\xi) e^{-\pi|\xi|^2},
\end{equation}
by \cite[Chapter~IV, Theorem~3.4]{SW71Book}.
The relevance of constants \eqref{eq:defcoeffgamma} comes from a formula by Bochner \cite{Boc51} (also see
\cite[Chapter~IV, Theorem~4.1]{SW71Book}): if $Y$ is a spherical harmonic of degree $j$ and if $0<\alpha<n$, then
\begin{equation}\label{eq:FT_on_Riesz}
K(x) = Y\Big(\frac{x}{|x|}\Big) |x|^{-n+\alpha} \ \implies\  \widehat{K}(\xi) = \mathbbm{i}^{-j} \gamma_{n,j,\alpha} Y\Big(\frac{\xi}{|\xi|}\Big) |\xi|^{-\alpha}.
\end{equation}
However, the last function $K$ is only locally integrable, so the Fourier transform needs to be understood as acting on the space of tempered distributions. 
Bochner's formula \eqref{eq:FT_on_Riesz} also holds in the limiting case $\alpha=0$ if $j\geq1$ and the function $K$ is interpreted as a principal value distribution 
\[ f \ \mapsto\ \operatorname{p.v.} \int_{\mathbb{R}^n} K(x) f(x) \,\textup{d}x = \lim_{\varepsilon\to0+} \int_{\{|x|\geq\varepsilon\}} K(x) f(x) \,\textup{d}x . \]
Then it reads
\begin{equation}\label{eq:FT_on_kernel}
K(x) = \operatorname{p.v.} Y\Big(\frac{x}{|x|}\Big) |x|^{-n} \ \implies\  \widehat{K}(\xi) = \mathbbm{i}^{-j} \gamma_{n,j,0} Y\Big(\frac{\xi}{|\xi|}\Big);
\end{equation}
see \cite[Chapter~IV, Theorem~4.5]{SW71Book}.
Stein and Weiss also formulated the ultimate consequence of \eqref{eq:FT_on_kernel} as \cite[Chapter~IV, Theorem~4.7]{SW71Book}: if $\Omega,\Omega_0\in\textup{L}^2(\mathbb{S}^{n-1})$ have related expansions into spherical harmonics of the form
\[ \Omega = \sum_{j=1}^{\infty} Y_j, \quad \Omega_0 = \sum_{j=1}^{\infty} \mathbbm{i}^{-j} \gamma_{n,j,0} Y_j, \]
then
\begin{equation}\label{eq:FT_general_SW}
K(x) = \operatorname{p.v.} \Omega\Big(\frac{x}{|x|}\Big) |x|^{-n} \ \implies\  \widehat{K}(\xi) = \Omega_0\Big(\frac{\xi}{|\xi|}\Big).
\end{equation}

Observe that
\begin{equation}\label{eq:gammasymmetry}
\gamma_{n,j,n-\alpha} = \frac{1}{\gamma_{n,j,\alpha}}
\end{equation}
holds whenever the constants $\gamma_{n,j,\alpha}$ are defined.
By writing
\[ \log\frac{\Gamma(j/2+\alpha/2)}{(j/2)^{\alpha/2}\Gamma(j/2)} 
= \int_{0}^{\alpha/2} \Big(\psi\Big(\frac{j}{2}+t\Big)-\log\frac{j}{2}\Big) \,\textup{d}t \]
in terms of the digamma function $\psi=\Gamma'/\Gamma$ and using the asymptotic expansion of $\psi$, see \cite[Eq.~5.11.2]{NIST} or \cite[Eq.~6.3.18]{ASbook64}, we easily conclude
\begin{equation}\label{eq:gammaasympt}
\gamma_{n,j,\alpha} \sim_{n} j^{\alpha-n/2}
\end{equation}
for $\alpha\in[0,n]$ and a positive integer $j$.
Also, writing
\[ \Gamma\Big(\frac{\alpha}{2}\Big) = \frac{\Gamma(\alpha/2 + 1)}{\alpha/2}, \quad \Gamma\Big(\frac{n-\alpha}{2}\Big) = \frac{\Gamma((n-\alpha)/2 + 1)}{(n-\alpha)/2}, \]
we easily get
\begin{equation}\label{eq:gammaasympt2}
\gamma_{n,0,\alpha} \sim_{n} \frac{n}{\alpha}-1
\end{equation}
for every $\alpha\in(0,n)$.

Let us also recall the \emph{spherical Laplacean}, which is a particular case of the \emph{Laplace--Beltrami operator}. In the case of the sphere $\mathbb{S}^{n-1}$ we can define $\Delta_{\mathbb{S}^{n-1}}f$ for a $\textup{C}^2$ function $f\colon\mathbb{S}^{n-1}\to\mathbb{C}$ by applying the ordinary $n$-dimensional Laplace operator $\Delta=\Delta_{\mathbb{R}^n}$ to the homogeneous function $\mathbb{R}^n\setminus\{\mathbf{0}\}\to\mathbb{C}$, $x\mapsto f(x/|x|)$ and then restricting back to the sphere $\mathbb{S}^{n-1}$. 
Spherical harmonics are eigenfunctions of $\Delta_{\mathbb{S}^{n-1}}$. More precisely, if $Y_j$ is a spherical harmonic of degree $j$, then
\begin{equation}\label{eq:eigenfunctions}
\Delta_{\mathbb{S}^{n-1}} Y_j = - j (j+n-2) Y_j ;
\end{equation}
see \cite[\S III.3.1.4]{Stein70book}.

For an integer $j\geq0$ let $H_j$ denote the orthogonal projection onto the linear subspace of $\textup{L}^2(\mathbb{S}^{n-1})$ consisting of spherical harmonics of degree $j$ (including the zero-function). Sogge \cite{Sogge86} established the sharp estimate
\[ \| H_j f \|_{\textup{L}^2(\mathbb{S}^{n-1})} \lesssim_{n,p} j^{\tau(n,p)} \| f \|_{\textup{L}^p(\mathbb{S}^{n-1})} \]
for $j\geq1$ and $1\leq p\leq 2$, where the exponent $\tau(n,p)$ is defined as
\[ \tau(n,p) := \begin{cases}
(n-1)\big(\frac{1}{p}-\frac{1}{2}\big)-\frac{1}{2} & \text{for } 1\leq p\leq \frac{2n}{n+2} , \\
\frac{1}{2}(n-2)\big(\frac{1}{p}-\frac{1}{2}\big) & \text{for } \frac{2n}{n+2}\leq p\leq2 .
\end{cases} \]
Since $H_j$ is self-adjoint, the last estimate has its dual formulation:
\[ \| H_j f \|_{\textup{L}^q(\mathbb{S}^{n-1})} \lesssim_{n,p} j^{\tau(n,p)} \| f \|_{\textup{L}^2(\mathbb{S}^{n-1})} , \]
where $2\leq q\leq\infty$ is the conjugate exponent of $p$.
In particular,
\begin{equation}\label{eq:Sogge_est}
 \| Y \|_{\textup{L}^{q}(\mathbb{S}^{n-1})} \lesssim_{n,p} j^{\tau(n,p)} \| Y \|_{\textup{L}^2(\mathbb{S}^{n-1})} 
\end{equation}
for every spherical harmonic $Y$ of degree $j\geq1$.


\section{Proof of the upper bounds in Theorem~\ref{thm:main}}
\label{sec:upper}
Before we start discussing any proofs, let us give a brief remark on symmetries of $T_{\phi}^{\lambda}$, which needs to be kept in mind throughout the paper.

\begin{remark}\label{rem:duality}
In the proofs of any upper or lower $\textup{L}^p$ bounds for $T_{\phi}^{\lambda}$ we can focus on the case $\lambda\geq1$ and $p\leq2$ only. This fact is an immediate consequence of the duality of $\textup{L}^p$ spaces and 
\[ \langle T_{\phi}^{\lambda} f, g \rangle_{\textup{L}^2(\mathbb{R}^n)} = \langle f, T_{\phi}^{-\lambda} g \rangle_{\textup{L}^2(\mathbb{R}^n)} = \langle T_{\phi}^{\lambda} \widetilde{g}, \widetilde{f} \rangle_{\textup{L}^2(\mathbb{R}^n)}, \]
where $\widetilde{f}(x)=\overline{f(-x)}$, $\widetilde{g}(x)=\overline{g(-x)}$.
\end{remark}

\smallskip
The upper bound \eqref{eq:weakupper} in Theorem~\ref{thm:main} is reduced to the weak $\textup{L}^1$ bound for the singular integral given in \eqref{eq:kernel}, by using representation \eqref{eq:Trepresentation} and an obvious bound \eqref{eq:coefficient}.
Estimate \eqref{eq:mainupper} then follows from the Marcinkiewicz interpolation theorem \cite[Theorem~1.3.2]{Gra14book}, which interpolates between the endpoint $\textup{L}^1$ case and the trivial $\textup{L}^2$ case \eqref{eq:Plancherel}, followed by duality observations in Remark~\ref{rem:duality}. 

Weak estimates for singular integrals with rough kernels were first proved in $n=2$ dimensions independently by Christ and Rubio de Francia \cite{CR88} and Hofmann \cite{Hof88}. A higher-dimensional analogue was later shown by Seeger \cite{Seeger96} and subsequently generalized further by Tao \cite{Tao99}. We will use the following theorem to establish \eqref{eq:weakupper}.

\begin{theorem}[from \cite{Seeger96}]
\label{thm:Seeger}
Let $\Omega\in \textup{L}^1(\mathbb{S}^{n-1})$ be such that $\int_{\mathbb{S}^{n-1}}\Omega(x)\,\textup{d}\sigma_{n-1}(x)=0$. If we denote $K(x)=\Omega(x/|x|)|x|^{-n}$, then the operator $S_{\Omega}$ defined for $f\in \textup{C}_{\textup{c}}^{\infty}(\mathbb{R}^n)$ as
\[ (S_{\Omega}f)(x):= \operatorname{p.v.}\int_{\mathbb{R}^n}f(x-y)K(y)\,\textup{d}y  \]
satisfies the bound
\[ \|S_{\Omega}\|_{\textup{L}^{1}(\mathbb{R}^n)\to \textup{L}^{1,\infty}(\mathbb{R}^n)}\lesssim_n 1+ \big\|\widehat{K}\big\|_{\textup{L}^{\infty}(\mathbb{R}^n)} + \int_{\mathbb{S}^{n-1}}|\Omega(x)|\bigg(1+\log_{+}\Big(\frac{|\Omega(x)|}{\|\Omega\|_{\textup{L}^1(\mathbb{S}^{n-1})}}\Big)\bigg) \,\textup{d}\sigma_{n-1}(x). \]
\end{theorem}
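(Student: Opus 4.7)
The plan is to follow Seeger's Calder\'{o}n--Zygmund-type scheme, in which the usual pointwise kernel-smoothness input is replaced by a microlocal almost-orthogonality argument adapted to merely $\textup{L}\log\textup{L}$ angular densities. The endpoint $\textup{L}^1$ bound cannot be obtained by interpolation, so the entire decomposition has to be carried out directly at a fixed height $\alpha>0$.

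First, for $\alpha>0$ I would run the Calder\'{o}n--Zygmund decomposition $f=g+\sum_Q b_Q$ at height $\alpha$: disjoint dyadic cubes $Q$, each $b_Q$ supported in $Q$ with mean zero, $\|g\|_{\textup{L}^\infty}\lesssim\alpha$, $\|g\|_{\textup{L}^1}\leq\|f\|_{\textup{L}^1}$, and $\sum_Q|Q|\lesssim\|f\|_{\textup{L}^1}/\alpha$. The good part $g$ is dispatched by Plancherel combined with the hypothesis $\|\widehat{K}\|_{\textup{L}^\infty}<\infty$, which gives an $\textup{L}^2\to\textup{L}^2$ bound for $S_\Omega$; Chebyshev then produces the $\|\widehat{K}\|_{\textup{L}^\infty}$-type term on the right-hand side of the theorem. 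After discarding the enlarged set $\Omega^{*}=\bigcup_Q C\,Q$, of measure $\lesssim\|f\|_{\textup{L}^1}/\alpha$, the remaining task is to bound $|\{x\notin\Omega^{*}:|S_\Omega b(x)|>\alpha/2\}|$ with the desired constant.

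For the bad part I would dyadically split the kernel $K=\sum_{k\in\mathbb{Z}}K_k$, with $K_k$ supported in $\{2^k\leq|y|<2^{k+1}\}$, and for each cube $Q$ discard all annuli with $2^k<\operatorname{diam}(Q)$, since those never reach points outside $\Omega^{*}$. Each retained annular piece is further decomposed $K_k=K_k^{\flat,\ell}+(K_k-K_k^{\flat,\ell})$ by convolution with a smooth bump of scale $2^{k-\ell}$, where $\ell\geq 1$ is an auxiliary parameter. The rough residual $K_k-K_k^{\flat,\ell}$ is estimated crudely in $\textup{L}^1$; summing over $k$ with $\ell$ chosen of order $\log_+(|\Omega|/\|\Omega\|_{\textup{L}^1})$ yields precisely the $\textup{L}\log\textup{L}$ contribution. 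The smoothed pieces $K_k^{\flat,\ell}$ must be treated by an almost-orthogonal $\textup{L}^2$ estimate, based on a microlocal decomposition of $\Omega$ into spherical caps of angular aperture $\sim 2^{-\ell/2}$ together with a companion directional Littlewood--Paley decomposition of the atoms $b_Q$.

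The main obstacle is precisely this almost-orthogonality step. Because $\Omega$ carries no regularity, the classical cancellation $|K(x-y)-K(x-y_Q)|\lesssim|y-y_Q|/|x-y_Q|^{n+1}$ is unavailable, so the required decay must be extracted from stationary-phase asymptotics for $\widehat{K_k^{\flat,\ell}}$ on each cap, carefully matched with the frequency support of the corresponding Littlewood--Paley projection. The target is an $\textup{L}^2$ estimate with at most polynomial-in-$\ell$ loss; once this is secured, summing the two kernel components geometrically in $\ell$ and $k$ balances the losses against the $\textup{L}\log\textup{L}$ gain, and combining with the good-part estimate delivers all three terms on the right-hand side of the theorem.
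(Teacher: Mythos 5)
The paper does not prove this statement at all: it is quoted verbatim as an external result of Seeger \cite{Seeger96} (building on Christ--Rubio de Francia and Hofmann in two dimensions), so there is no internal proof to compare against. Judged on its own terms, your outline does reproduce the architecture of Seeger's argument correctly: Calder\'{o}n--Zygmund decomposition, $\textup{L}^2$ treatment of the good part via $\|\widehat{K}\|_{\textup{L}^\infty}$, dyadic annular splitting $K=\sum_k K_k$, mollification of each piece at scale $2^{k-\ell}$ with the rough residual producing the $\textup{L}\log\textup{L}$ term, and a microlocal decomposition into caps of aperture $\sim 2^{-\ell/2}$ for the smoothed pieces.

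The problem is that the one step carrying essentially all of the analytic content --- the almost-orthogonality $\textup{L}^2$ estimate for $\sum_k K_k^{\flat,\ell}\ast b$ with only polynomial (in fact one needs exponential, $2^{-\epsilon\ell}$-type) gain in $\ell$ --- is explicitly left as a ``target'' rather than proved. This is not a routine gap one can wave at: it is the heart of Seeger's paper. In particular, (i) the cap decomposition forces the removal of \emph{additional} exceptional sets (unions of plates/tubes adapted to each direction and each scale, whose measure must itself be controlled by $\|f\|_{\textup{L}^1}/\alpha$); (ii) the companion decomposition of the atoms is not the standard Littlewood--Paley projection but a direction-dependent one, and the claimed matching of frequency supports with stationary-phase asymptotics of $\widehat{K_k^{\flat,\ell}}$ has to be quantified before anything can be summed; and (iii) without a gain that is summable in $\ell$, the choice $\ell\sim\log_+(|\Omega|/\|\Omega\|_{\textup{L}^1})$ does not close. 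A secondary, fixable issue: Chebyshev applied to the good part as you describe yields $\|\widehat{K}\|_{\textup{L}^\infty}^2$ rather than the first power; one should run the stopping-time at height comparable to $\alpha/\|\widehat{K}\|_{\textup{L}^\infty}$ (or normalize the kernel) to recover the stated linear dependence. As written, the proposal is a faithful roadmap of the known proof, not a proof.
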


Since the last integral is difficult to compute for a kernel that is defined implicitly via the corresponding multiplier symbol, we find it  convenient that the whole expression on the right hand side is bounded by $\|\Omega\|_{\textup{L}^2(\mathbb{S}^{n-1})}$.
Indeed, if we define 
\[ A_0:=\Big\{x\in \mathbb{S}^{n-1}: \frac{|\Omega(x)|}{\|\Omega\|_{\textup{L}^1}} \leq 1 \Big\} 
\quad \text{and} \quad
A_k:=\Big\{x\in \mathbb{S}^{n-1}: 2^{k-1}< \frac{|\Omega(x)|}{\|\Omega\|_{\textup{L}^1}} \leq 2^k\Big\}; \quad k\geq1, \] 
then Chebyshev's inequality implies $\sigma_{n-1}(A_k) \lesssim 2^{-k}$. 
Thus, bounding the logarithm with the upper bound of the function $\Omega$ on the set $A_k$ and
using the Cauchy--Schwarz inequality, it follows that
\begin{align*}
& \int_{\mathbb{S}^{n-1}}|\Omega(x)|\bigg(1+\log_{+}\Big(\frac{|\Omega(x)|}{\|\Omega\|_{\textup{L}^1(\mathbb{S}^{n-1})}}\Big)\bigg) \,\textup{d}\sigma_{n-1}(x)
\lesssim \sum_{k=0}^{\infty} \int_{A_k}|\Omega(x)| (k+1) \,\textup{d}\sigma_{n-1}(x) \\
& \leq \sum_{k=0}^{\infty}(k+1) \|\Omega\|_{\textup{L}^2(\mathbb{S}^{n-1})}\sigma_{n-1}(A_k)^{1/2}
\lesssim \|\Omega\|_{\textup{L}^2(\mathbb{S}^{n-1})}\sum_{k=0}^{\infty}(k+1)2^{-k/2} 
\lesssim \|\Omega\|_{\textup{L}^2(\mathbb{S}^{n-1})}.
\end{align*}
Therefore, in order to apply Theorem~\ref{thm:Seeger} to the operator kernel $\Omega_{\phi}^{\lambda}$ from \eqref{eq:kernel}, we also observe that for $K_{\phi}^{\lambda}(x)=\operatorname{p.v.}\Omega_{\phi}^{\lambda}(x/|x|)|x|^{-n}$ we have
$\big|\widehat{K_{\phi}^{\lambda}}(\xi)\big| =|m_{\phi}^{\lambda}(\xi)- a_{\phi}^{\lambda}| \leq 2$,
where we recall \eqref{eq:coefficient}.
Thus, it remains to prove
\begin{equation}\label{eq:main_upper_ab}
\|\Omega_{\phi}^{\lambda}\|_{\textup{L}^2(\mathbb{S}^{n-1})}\lesssim_{n,\phi} \lambda^{n/2}
\end{equation}
for $\lambda\geq1$.

From equation \eqref{eq:FT_general_SW} and the symmetry property \eqref{eq:gammasymmetry} we can see that expansions of $\Omega_{\phi}^{\lambda}$ and $m_{\phi}^{\lambda}$ into spherical harmonics are related as:
\begin{equation*}
m_{\phi}^{\lambda} = \sum_{j=0}^{\infty} Y_j \quad\implies\quad \Omega_{\phi}^{\lambda} = \sum_{j=1}^{\infty} \mathbbm{i}^j \gamma_{n,j,n} Y_j,
\end{equation*}
where the two series converge in the $\textup{L}^2$ sense.  
Now, the aforementioned asymptotics \eqref{eq:gammaasympt} implies $\gamma_{n,j,n} \sim_{n} j^{n/2}$, so
\begin{equation}\label{eq:omega_upper_first}
\|\Omega_{\phi}^{\lambda}\|_{\textup{L}^2(\mathbb{S}^{n-1})}^2 = \sum_{j = 1}^{\infty} \gamma_{n,j,n}^{2}\|Y_j\|_{\textup{L}^2(\mathbb{S}^{n-1})}^2 \lesssim_{n} \sum_{j=0}^{\infty} j^{n}\|Y_j\|_{\textup{L}^2(\mathbb{S}^{n-1})}^2.
\end{equation}
Recalling that the spherical harmonics are eigenfunctions of the spherical Laplacean, namely that \eqref{eq:eigenfunctions} holds, we arrive at
\[ \Delta_{\mathbb{S}^{n-1}}^rm_{\phi}^{\lambda} = \sum_{j=1}^{\infty} \big(\!-j(j+n-2)\big)^{r} Y_j,\]
for any positive integer $r$. Uniform convergence of the above series is needed for justification of the performed term-by-term differentiation, but it is, in turn, guaranteed by the smoothness of $m_{\phi}^{\lambda}$, Sogge's estimate \eqref{eq:Sogge_est}, and the standard results on rapid convergence of spherical harmonic expansions of smooth functions in \cite[\S~3.1.5]{Stein70book}.
Using \eqref{eq:omega_upper_first} and H\"{o}lder's inequality, and by choosing $r= \lceil n/4 \rceil$ in the previous display, we can write
{\allowdisplaybreaks
\begin{align}
\|\Omega_{\phi}^{\lambda}\|_{\textup{L}^2(\mathbb{S}^{n-1})}^2 
& \lesssim_{n} \Big(\sum_{j=1}^{\infty} j^{4r}\|Y_j\|_{\textup{L}^2(\mathbb{S}^{n-1})}^2 \Big)^{n/4r}\Big(\sum_{j=0}^{\infty}\|Y_j\|_{\textup{L}^2(\mathbb{S}^{n-1})}^2 \Big)^{1-n/4r} \nonumber \\
& \leq \Big(\sum_{j=1}^{\infty} \big(j(j+n-2)\big)^{2r}\|Y_j\|_{\textup{L}^2(\mathbb{S}^{n-1})}^2 \Big)^{n/4r}\Big(\sum_{j=0}^{\infty}\|Y_j\|_{\textup{L}^2(\mathbb{S}^{n-1})}^2 \Big)^{1-n/4r} \nonumber \\
& = \|\Delta_{\mathbb{S}^{n-1}}^rm_{\phi}^{\lambda}\|_{\textup{L}^2(\mathbb{S}^{n-1})}^{n/2r} \|m_{\phi}^{\lambda}\|_{\textup{L}^2(\mathbb{S}^{n-1})}^{2-n/2r}
= \|\Delta_{\mathbb{S}^{n-1}}^rm_{\phi}^{\lambda}\|_{\textup{L}^2(\mathbb{S}^{n-1})}^{n/2r} \nonumber \\
& \lesssim_{n} \|\Delta_{\mathbb{S}^{n-1}}^rm_{\phi}^{\lambda}\|_{\textup{L}^{\infty}(\mathbb{S}^{n-1})}^{n/2r} . \label{eq:omega_upper_second}
\end{align}
}

Therefore, it remains to bound 
\[ \|\Delta_{\mathbb{S}^{n-1}}^r e^{\mathbbm{i}\lambda\phi}\|_{\textup{L}^{\infty}(\mathbb{S}^{n-1})}. \]
First observe that, if a function $f\in \textup{C}^{\infty}(\mathbb{S}^{n-1})$ is of the form 
\[ f=e^{\mathbbm{i}\lambda \phi}\sum_{l=0}^{k}\lambda^l\phi_l, \]
for some nonnegative integer $k$ and some functions $\phi, \phi_0,\dots, \phi_k\in \textup{C}^{\infty}(\mathbb{S}^{n-1}),$  
then there exist functions $\widetilde{\phi}_0,\dots, \widetilde{\phi}_{k+2}\in \textup{C}^{\infty}(\mathbb{S}^{n-1})$ such that
\[ \Delta_{\mathbb{S}^{n-1}}f=e^{\mathbbm{i}\lambda \phi}\sum_{l=0}^{k+2}\lambda^l\widetilde{\phi}_l. \]
The claim is easily seen by calculating the Laplacean $\Delta_{\mathbb{R}^n}$ of the homogenized expression
\[ \mathbb{R}^n\setminus\{0\}\ni x\mapsto e^{\mathbbm{i}\lambda \phi(x/|x|)}\sum_{l=0}^{k}\lambda^{l}\phi_l\Big(\frac{x}{|x|} \Big) \]
and evaluating it at $x\in \mathbb{S}^{n-1}$.
Applying the previous observation inductively, we conclude that there exist functions $\phi_0,\dots, \phi_{2r}\in \textup{C}^{\infty}(\mathbb{S}^{n-1})$ depending only on $\phi$ such that
\[\Delta_{\mathbb{S}^{n-1}}^re^{\mathbbm{i}\lambda\phi}=e^{\mathbbm{i}\lambda\phi}\sum_{l=0}^{2r}\lambda^{l}\phi_{l}.\]
Since $\mathbb{S}^{n-1}$ is compact and the functions $\phi_l$ are continuous, they are also bounded and, therefore, 
\[ \|\Delta_{\mathbb{S}^{n-1}}^r e^{\mathbbm{i}\lambda\phi}\|_{\textup{L}^{\infty}(\mathbb{S}^{n-1})}\leq \sum_{l=0}^{2r}\lambda^{l}\|\phi_l\|_{\textup{L}^{\infty}(\mathbb{S}^{n-1})}\lesssim_{n,\phi} \lambda^{2r} \]
for $\lambda\geq 1$.
The desired estimate \eqref{eq:main_upper_ab} now follows from \eqref{eq:omega_upper_second}.


\section{Proof of Theorem~\ref{thm:generallower}}
\label{sec:lower}
In this section we develop a somewhat general scheme of bounding norms of Fourier multiplier operators from below by constructing functions on $\mathbb{S}^{n-1}$ from infinite sums of spherical harmonics.
The following auxiliary lemma can be thought of as a quantitative refinement of the classical formula \eqref{eq:FT_on_Riesz}.
Recall the constants \eqref{eq:defcoeffgamma}.

\begin{lemma}\label{lm:lowercomp}
For $p\in(1,\infty)$, $q=p/(p-1)$, $\varepsilon\in(0,1/2]$, and a spherical harmonic $Y$ of degree $j\geq0$ one can find a Schwartz function $g_{n,p,\varepsilon,Y}$ such that
\begin{equation}\label{eq:lowercomp1}
\Big\| g_{n,p,\varepsilon,Y}(x) - Y\Big(\frac{x}{|x|}\Big) |x|^{-n/p} \mathbbm{1}_{\{\varepsilon\leq |x|\leq1/\varepsilon\}}(x) \Big\|_{\textup{L}_x^p(\mathbb{R}^n)} \lesssim_{n,p,Y} 1
\end{equation}
and
\begin{equation}\label{eq:lowercomp2}
\Big\| \widehat{g}_{n,p,\varepsilon,Y}(\xi) - \mathbbm{i}^{-j} \gamma_{n,j,n/q} Y\Big(\frac{\xi}{|\xi|}\Big) |\xi|^{-n/q} \mathbbm{1}_{\{\varepsilon\leq |\xi|\leq1/\varepsilon\}}(\xi) \Big\|_{\textup{L}_{\xi}^q(\mathbb{R}^n)} \lesssim_{n,p,Y} 1 .
\end{equation}
Consequently, also
\begin{equation}\label{eq:lowercomp3}
\|g_{n,p,\varepsilon,Y}\|_{\textup{L}^p(\mathbb{R}^n)} = \|Y\|_{\textup{L}^p(\mathbb{S}^{n-1})} \Big(2\log\frac{1}{\varepsilon}\Big)^{1/p} + O^{\varepsilon\to0+}_{n,p,Y}(1)
\end{equation}
and
\begin{equation}\label{eq:lowercomp4}
\big\|\widehat{g}_{n,p,\varepsilon,Y}\big\|_{\textup{L}^q(\mathbb{R}^n)} = \gamma_{n,j,n/q} \|Y\|_{\textup{L}^q(\mathbb{S}^{n-1})} \Big(2\log\frac{1}{\varepsilon}\Big)^{1/q} + O^{\varepsilon\to0+}_{n,p,Y}(1) .
\end{equation}
\end{lemma}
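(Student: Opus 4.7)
The plan is to take $g_{n,p,\varepsilon,Y}$ as a smooth compactly supported truncation of the homogeneous function $T(x) := Y(x/|x|)\,|x|^{-n/p}$ to the annulus $\{\varepsilon \le |x| \le 1/\varepsilon\}$. Fix once and for all cut-offs $\psi_0,\psi_\infty \in \textup{C}^\infty([0,\infty))$ with $\psi_0 \equiv 0$ on $[0,\tfrac{1}{2}]$, $\psi_0 \equiv 1$ on $[1,\infty)$, $\psi_\infty \equiv 1$ on $[0,1]$, $\psi_\infty \equiv 0$ on $[2,\infty)$, and set
\[ g_{n,p,\varepsilon,Y}(x) := T(x)\,\phi_\varepsilon(|x|), \qquad \phi_\varepsilon(r) := \psi_0(r/\varepsilon)\,\psi_\infty(\varepsilon r). \]
Since $\phi_\varepsilon$ vanishes in a neighborhood of $0$ and has compact support, $g_{n,p,\varepsilon,Y}\in\textup{C}_{\textup{c}}^\infty(\mathbb{R}^n)$ and in particular is Schwartz.

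Estimate \eqref{eq:lowercomp1} is then immediate: the difference is supported in the two transition annuli $\{\varepsilon/2\le|x|\le\varepsilon\}$ and $\{1/\varepsilon\le|x|\le 2/\varepsilon\}$ and dominated there by $|T|$, so polar coordinates together with $\int_a^{2a}r^{-1}\,\textup{d}r=\log 2$ produce the desired $\lesssim_{n,p,Y}1$ bound. Once \eqref{eq:lowercomp1} and \eqref{eq:lowercomp2} are in place, the consequences \eqref{eq:lowercomp3}, \eqref{eq:lowercomp4} follow from the triangle inequality applied to the decomposition $g_{n,p,\varepsilon,Y}=(g_{n,p,\varepsilon,Y}-K_0)+K_0$ with $K_0$ the sharp truncation, together with the direct polar-coordinate identity $\|K_0\|_{\textup{L}^p(\mathbb{R}^n)}^p=2\,\|Y\|_{\textup{L}^p(\mathbb{S}^{n-1})}^p\log(1/\varepsilon)$ and its Fourier-side analogue.

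The substantive step is \eqref{eq:lowercomp2}. The plan is to apply the explicit Fourier-transform formula for a radial profile times a solid spherical harmonic, \cite[Chapter~IV, Theorem~3.10]{SW71Book}, with $P(x)=Y(x/|x|)|x|^j$ and radial profile $r^{-j-n/p}\phi_\varepsilon(r)$; after the change of variables $s=2\pi r|\xi|$ one arrives at
\[ \widehat{g_{n,p,\varepsilon,Y}}(\xi) = \mathbbm{i}^{-j}\,Y(\xi/|\xi|)\,|\xi|^{-n/q}\,F_\varepsilon(|\xi|),\quad F_\varepsilon(\rho) := (2\pi)^{-\alpha}\int_0^\infty s^\alpha\,J_\nu(s)\,\phi_\varepsilon\!\Big(\frac{s}{2\pi\rho}\Big)\,\textup{d}s, \]
with $\nu := j+\tfrac{n-2}{2}$ and $\alpha := \tfrac{n}{2}-\tfrac{n}{p}$. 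The Weber--Schafheitlin integral $(2\pi)^{-\alpha}\int_0^\infty s^\alpha J_\nu(s)\,\textup{d}s$ (interpreted in a regularized sense when $\alpha\ge\tfrac{1}{2}$) evaluates to exactly $\gamma_{n,j,n/q}$, as can be read off from Bochner's formula \eqref{eq:FT_on_Riesz} and the definition \eqref{eq:defcoeffgamma}. Thus the target of the approximation is really the sharp truncation of $\widehat{T}$.

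The principal obstacle is then a quantitative Bessel-integral estimate showing that the error $F_\varepsilon(\rho)-\gamma_{n,j,n/q}\,\mathbbm{1}_{[\varepsilon,1/\varepsilon]}(\rho)$, weighted by $\rho^{-n/q}$ and integrated in polar coordinates, contributes $O_{n,p,Y}(1)$ in $\textup{L}^q(\mathbb{R}^n)$ and not a spurious $(\log(1/\varepsilon))^{1/q}$ term. For $\rho\in[\varepsilon,1/\varepsilon]$ the error equals $\int_0^\infty s^\alpha J_\nu(s)[\phi_\varepsilon(s/(2\pi\rho))-1]\,\textup{d}s$, which vanishes for $s\in[2\pi\rho\varepsilon,2\pi\rho/\varepsilon]$; the small-$s$ tail is handled using $J_\nu(s)=O(s^\nu)$ near the origin, yielding pointwise decay of order $(\rho\varepsilon)^{\nu+\alpha+1}=(\rho\varepsilon)^{j+n/q}$, and the large-$s$ tail is tamed by repeated integration by parts using the Bessel recurrences $\frac{d}{ds}[s^{\nu+1}J_{\nu+1}(s)]=s^{\nu+1}J_\nu(s)$ to gain arbitrary polynomial decay in $\rho/\varepsilon$. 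For $\rho$ outside $[\varepsilon,1/\varepsilon]$, the quantity $F_\varepsilon(\rho)$ itself is estimated by the same Bessel asymptotics, now exploiting that the $s$-support of $\phi_\varepsilon(s/(2\pi\rho))$ lies entirely in a region where $J_\nu$ has the required size or oscillation. Combining these pointwise bounds and integrating in polar coordinates then delivers \eqref{eq:lowercomp2}.
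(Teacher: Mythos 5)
Your construction is genuinely different from the paper's, and I believe it can be pushed through, but be aware of where the real work sits. The paper does not truncate the homogeneous function directly; it builds $g$ as a continuous superposition of Gaussians modulated by the solid harmonic $P$, namely $g(x)=c\,P(x)\int_\varepsilon^{1/\varepsilon}e^{-\pi t^{-2}|x|^2}t^{-n/p-j-1}\,\textup{d}t$. The Hecke--Bochner identity \eqref{eq:FT_on_Gaussians} then gives $\widehat{g}$ in closed form of the same shape, both $g$ and $\widehat{g}$ become the target homogeneous profiles multiplied by normalized incomplete gamma functions, and estimates \eqref{eq:lowercomp1}--\eqref{eq:lowercomp2} reduce to four elementary one-dimensional integrals, symmetric under $p\leftrightarrow q$. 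Your route makes \eqref{eq:lowercomp1} trivial but transfers all the difficulty to the Fourier side, where you must run a Hankel-transform/oscillatory-integral analysis. Two points there deserve care. First, the identification of the (regularized) Weber--Schafheitlin value with $\gamma_{n,j,n/q}$ is fine, but note that regularization is genuinely needed whenever $n|1/2-1/p|\geq 1/2$, which occurs for $p$ large. Second, and more importantly, in the large-$s$ tail a naive split into the transition band $[2\pi\rho/\varepsilon,\,4\pi\rho/\varepsilon]$ and the constant tail beyond it produces two separate $O(1)$ oscillating contributions that do \emph{not} individually decay in $\rho/\varepsilon$ when $\alpha\geq 1/2$; one must integrate by parts against the oscillation (or via the recurrence you cite) on the \emph{whole} smooth function $\phi_\varepsilon(s/(2\pi\rho))-1$ before splitting, using that it vanishes to infinite order at $s=2\pi\rho\varepsilon$ and $s=2\pi\rho/\varepsilon$, so that the boundary terms cancel and each step gains a full power of $\varepsilon/\rho$. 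With that done, your pointwise bounds $(\rho\varepsilon)^{j+n/q}$ and $O_N((\rho/\varepsilon)^{-N})$ are correct and integrate to $O(1)$ in $\textup{L}^q(\textup{d}\rho/\rho)$, as do the bounds for $\rho\notin[\varepsilon,1/\varepsilon]$. In short: both approaches work; the paper's buys an exactly computable Fourier transform and purely elementary error estimates at the cost of a less obvious test function, while yours uses the natural test function at the cost of Bessel asymptotics and a cancellation that is easy to lose if the tail is split carelessly.
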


We emphasize that the implicit constants in \eqref{eq:lowercomp1}--\eqref{eq:lowercomp4} do not depend on $\varepsilon$.

\begin{proof}
Note that $Y$ extends from $\mathbb{S}^{n-1}$ to the unique solid spherical harmonic $P$ of degree $j$ on $\mathbb{R}^n$ via $P(x)=|x|^j Y(x/|x|)$.
We will construct $g=g_{n,p,\varepsilon,Y}$ as a superposition of dilated Gaussian functions, very similarly as these were employed in \cite[Sections~IV.3--IV.4]{SW71Book}.
Define
\[ g(x) := \frac{2\pi^{j/2+n/2p}}{\Gamma(j/2+n/2p)}\, P(x) \int_{\varepsilon}^{1/\varepsilon} e^{-\pi t^{-2}|x|^2} t^{-n/p-j-1} \,\textup{d}t ; \quad x\in\mathbb{R}^n. \]
This is clearly a Schwartz function.
Dilating formula \eqref{eq:FT_on_Gaussians} we get
\[ f(x) = t^{-n-j} P(x) e^{-\pi t^{-2}|x|^2} \ \implies\ \widehat{f}(\xi) = \mathbbm{i}^{-j} t^j P(\xi) e^{-\pi t^2|\xi|^2} , \]
which enables us to take the Fourier transform of $g$:
\[ \widehat{g}(\xi) = \frac{2\mathbbm{i}^{-j}\pi^{j/2+n/2p}}{\Gamma(j/2+n/2p)}\, P(\xi) \int_{\varepsilon}^{1/\varepsilon} e^{-\pi t^2|\xi|^2} t^{n/q+j-1} \,\textup{d}t ; \quad \xi\in\mathbb{R}^n. \]
An easy computation changing the variables of integration leads to
\begin{align*}
g(x) & = Y\Big(\frac{x}{|x|}\Big) |x|^{-n/p} \frac{1}{\Gamma(j/2+n/2p)} \int_{\pi\varepsilon^{2}|x|^2}^{\pi\varepsilon^{-2}|x|^2} u^{j/2+n/2p-1} e^{-u} \,\textup{d}u, \\
\widehat{g}(\xi) & = \frac{\mathbbm{i}^{-j} \pi^{n/2p-n/2q}}{\Gamma(j/2+n/2p)}\, Y\Big(\frac{\xi}{|\xi|}\Big) |\xi|^{-n/q} \int_{\pi\varepsilon^{2}|\xi|^2}^{\pi\varepsilon^{-2}|\xi|^2} u^{j/2+n/2q-1} e^{-u} \,\textup{d}u \\
& = \mathbbm{i}^{-j} \gamma_{n,j,n/q}\, Y\Big(\frac{\xi}{|\xi|}\Big) |\xi|^{-n/q} \frac{1}{\Gamma(j/2+n/2q)} \int_{\pi\varepsilon^{2}|\xi|^2}^{\pi\varepsilon^{-2}|\xi|^2} u^{j/2+n/2q-1} e^{-u} \,\textup{d}u .
\end{align*}
Since $Y$ is clearly bounded on the unit sphere $\mathbb{S}^{n-1}$, estimate \eqref{eq:lowercomp1} is now reduced to
\[ \Big\| \Big( \int_{\pi\varepsilon^{2}|x|^2}^{\pi\varepsilon^{-2}|x|^2} u^{\beta-1} e^{-u} \,\textup{d}u - \Gamma(\beta) \mathbbm{1}_{\{\varepsilon\leq |x|\leq1/\varepsilon\}}(x) \Big) |x|^{-n/p}  \Big\|_{\textup{L}_x^p(\mathbb{R}^n)} \lesssim_{n,p,\beta} 1 \]
for some $\beta\in(0,\infty)$, while \eqref{eq:lowercomp2} then also follows by interchanging $p$ and $q$.
Moreover, by passing to $n$-dimensional spherical coordinates and using the definition of the gamma function, we see that it remains to establish the following four elementary estimates:
{\allowdisplaybreaks
\begin{subequations}
\begin{align}
& \int_{0}^{\varepsilon} \Big( \int_{\pi\varepsilon^2 r^2}^{\pi\varepsilon^{-2} r^2} u^{\beta-1} e^{-u} \,\textup{d}u \Big)^p \,\frac{\textup{d}r}{r} \lesssim_{p,\beta}1 , \label{eq:lowerlmaux1} \\
& \int_{1/\varepsilon}^{\infty} \Big( \int_{\pi\varepsilon^2 r^2}^{\pi\varepsilon^{-2} r^2} u^{\beta-1} e^{-u} \,\textup{d}u \Big)^p \,\frac{\textup{d}r}{r} \lesssim_{p,\beta}1 , \label{eq:lowerlmaux2} \\
& \int_{\varepsilon}^{1/\varepsilon} \Big( \int_{0}^{\pi\varepsilon^2 r^2} u^{\beta-1} e^{-u} \,\textup{d}u \Big)^p \,\frac{\textup{d}r}{r} \lesssim_{p,\beta}1 . \label{eq:lowerlmaux3} \\
& \int_{\varepsilon}^{1/\varepsilon} \Big( \int_{\pi\varepsilon^{-2} r^2}^{\infty} u^{\beta-1} e^{-u} \,\textup{d}u \Big)^p \,\frac{\textup{d}r}{r} \lesssim_{p,\beta}1 . \label{eq:lowerlmaux4}
\end{align}
\end{subequations}
}

We now provide detailed proofs of \eqref{eq:lowerlmaux1}--\eqref{eq:lowerlmaux4}, even though similar computations appeared in \cite[Section~6]{CDK21}.

\emph{Proof of \eqref{eq:lowerlmaux1}.}
For a fixed $r\in(0,\varepsilon]$ we estimate the inner integral as
\[ \int_{\pi\varepsilon^2 r^2}^{\pi\varepsilon^{-2} r^2} u^{\beta-1} e^{-u} \,\textup{d}u
\lesssim \int_{0}^{\pi\varepsilon^{-2} r^2} u^{\beta-1}\,\textup{d}u \lesssim_\beta (\varepsilon^{-1} r)^{2\beta} , \]
then we raise it to the $p$-th power and integrate in $r$, getting
\[ \varepsilon^{-2\beta p} \int_{0}^{\varepsilon} r^{2\beta p-1} \,\textup{d}r
= \frac{1}{2\beta p} < \infty. \]

\emph{Proof of \eqref{eq:lowerlmaux2}.}
Using integration by parts as many times as needed (depending on $\beta$), we easily obtain the following estimate for the incomplete gamma function:
\begin{equation}\label{eq:incompletegamma}
\int_{x}^{\infty} u^{\beta-1} e^{-u} \,\textup{d}u \lesssim_\beta x^{\beta-1} e^{-x}; \quad x\in[1,\infty);
\end{equation}
also see \cite[Eq.~8.11.1--8.11.3]{NIST} or \cite[Eq.~6.5.32]{ASbook64}. By taking $x=\pi\varepsilon^2 r^2$ for any $r\in[1/\varepsilon,\infty)$, we get
\[ \int_{\pi\varepsilon^2 r^2}^{\pi\varepsilon^{-2} r^2} u^{\beta-1} e^{-u} \,\textup{d}u \lesssim_\beta (\varepsilon r)^{2\beta-2} e^{-\pi\varepsilon^2 r^2}. \]
This is raised to the $p$-th power and integrated in $r$, substituting $s=\pi p \varepsilon^2 r^2$:
\[ \int_{1/\varepsilon}^{\infty} (\varepsilon r)^{(2\beta-2)p} e^{-\pi p \varepsilon^2 r^2} \,\frac{\textup{d}r}{r} 
= \frac{1}{2} (\pi p)^{(1-\beta)p} \int_{\pi p}^{\infty} s^{(\beta-1)p-1} e^{-s} \,\textup{d}s < \infty. \]

\emph{Proof of \eqref{eq:lowerlmaux3}.}
From
\[ \int_{0}^{\pi\varepsilon^2 r^2} u^{\beta-1} e^{-u} \,\textup{d}u
\lesssim \int_{0}^{\pi\varepsilon^2 r^2} u^{\beta-1}\,\textup{d}u \lesssim_\beta (\varepsilon r)^{2\beta} \]
we see that the left hand side of \eqref{eq:lowerlmaux3} is at most a multiple of
\[ \varepsilon^{2\beta p} \int_{0}^{1/\varepsilon} r^{2\beta p-1} \,\textup{d}r
= \frac{1}{2\beta p} < \infty. \]

\emph{Proof of \eqref{eq:lowerlmaux4}.}
Using \eqref{eq:incompletegamma} again we can write
\[ \int_{\pi\varepsilon^{-2} r^2}^{\infty} u^{\beta-1} e^{-u} \,\textup{d}u \lesssim_\beta (\varepsilon^{-1} r)^{2\beta-2} e^{-\pi\varepsilon^{-2} r^2}. \]
Thus, the left hand side of \eqref{eq:lowerlmaux4} is at most a constant times
\[ \int_{\varepsilon}^{\infty} (\varepsilon^{-1} r)^{(2\beta-2)p} e^{-\pi p \varepsilon^{-2} r^2} \,\frac{\textup{d}r}{r} 
= \frac{1}{2} (\pi p)^{(1-\beta)p} \int_{\pi p}^{\infty} s^{(\beta-1)p-1} e^{-s} \,\textup{d}s < \infty. \]
This also completes the proofs of \eqref{eq:lowercomp1} and \eqref{eq:lowercomp2}.

In order to establish \eqref{eq:lowercomp3} we only need to combine \eqref{eq:lowercomp1} with
\[ \Big\| Y\Big(\frac{x}{|x|}\Big) |x|^{-n/p} \mathbbm{1}_{\{\varepsilon\leq |x|\leq1/\varepsilon\}}(x) \Big\|_{\textup{L}_x^p(\mathbb{R}^n)} = \|Y\|_{\textup{L}^p(\mathbb{S}^{n-1})} \Big(2\log\frac{1}{\varepsilon}\Big)^{1/p}. \]
In the same way we verify \eqref{eq:lowercomp4}.
\end{proof}

Observe that the error terms in \eqref{eq:lowercomp1} and \eqref{eq:lowercomp2} are of smaller order in $\varepsilon$ than both of the main terms
\begin{equation}\label{eq:lowercomp5}
Y\Big(\frac{x}{|x|}\Big) |x|^{-n/p} \mathbbm{1}_{\{\varepsilon\leq |x|\leq1/\varepsilon\}}(x)
\end{equation}
and
\begin{equation}\label{eq:lowercomp6}
\mathbbm{i}^{-j} \gamma_{n,j,n/q} Y\Big(\frac{\xi}{|\xi|}\Big) |\xi|^{-n/q} \mathbbm{1}_{\{\varepsilon\leq |\xi|\leq1/\varepsilon\}}(\xi) ,
\end{equation}
norms of which were calculated in \eqref{eq:lowercomp3} and \eqref{eq:lowercomp4}, respectively.
Thus, Lemma~\ref{lm:lowercomp} enables us to think of \eqref{eq:lowercomp6} as an approximation of the Fourier transform of \eqref{eq:lowercomp5} up to a small relative error.
By letting $\varepsilon\to0+$ we would formally recover Bochner's distributional identity \eqref{eq:FT_on_Riesz}, but it is crucial for us to stay within the realm of function spaces $\textup{L}^p(\mathbb{R}^n)$ and $\textup{L}^q(\mathbb{R}^n)$.

\smallskip
We are finally in a position to prove Theorem~\ref{thm:generallower}.

\begin{proof}[Proof of Theorem~\ref{thm:generallower}]
Let us first assume that $p>1$, $q<\infty$.
Take a number $\varepsilon\in(0,1/2]$ and a positive integer $J$. By several applications of Lemma~\ref{lm:lowercomp} we can find a Schwartz function $g$ (depending on  $n,p,\varepsilon,m,J$) such that $g$ differs from
\[ x \mapsto \bigg(\sum_{j=0}^{J} Y_j\Big(\frac{x}{|x|}\Big)\bigg) |x|^{-n/q} \mathbbm{1}_{\{\varepsilon\leq|x|\leq1/\varepsilon\}}(x) \]
in the $\textup{L}^q$ norm by $O_{n,p,m,J}(1)$ as $\varepsilon\to0+$, while $\widehat{g}$ differs from 
\[ \xi \mapsto \bigg(\sum_{j=0}^{J} \mathbbm{i}^{-j} \gamma_{n,j,n/p} Y_j\Big(\frac{\xi}{|\xi|}\Big)\bigg) |\xi|^{-n/p} \mathbbm{1}_{\{\varepsilon\leq|\xi|\leq1/\varepsilon\}}(\xi) \]
in the $\textup{L}^p$ norm by $O_{n,p,m,J}(1)$ as $\varepsilon\to0+$.
(Note that the roles of $p$ and $q$ were interchanged here.)
From the same lemma we also obtain a Schwartz function $f$ (depending on  $n,p,\varepsilon$) such that $f$ differs from
\[ x \mapsto |x|^{-n/p} \mathbbm{1}_{\{\varepsilon\leq|x|\leq1/\varepsilon\}}(x) \]
in the $\textup{L}^p$ norm by $O_{n,p}(1)$ as $\varepsilon\to0+$, while $\widehat{f}$ differs from 
\[ \xi \mapsto \gamma_{n,0,n/q} |\xi|^{-n/q} \mathbbm{1}_{\{\varepsilon\leq|\xi|\leq1/\varepsilon\}}(\xi) \]
in the $\textup{L}^q$ norm by $O_{n,p}(1)$ as $\varepsilon\to0+$.
Using Plancherel's theorem we bound
\begin{align*}
& \|T_{m}\|_{\textup{L}^p(\mathbb{R}^n)\to\textup{L}^p(\mathbb{R}^n)} 
\geq \frac{|\langle T_{m} f, g\rangle_{\textup{L}^2(\mathbb{R}^n)}|}{\|f\|_{\textup{L}^p(\mathbb{R}^n)} \|g\|_{\textup{L}^q(\mathbb{R}^n)}}
= \frac{|\langle m \widehat{f}, \widehat{g}\rangle_{\textup{L}^2(\mathbb{R}^n)}|}{\|f\|_{\textup{L}^p(\mathbb{R}^n)} \|g\|_{\textup{L}^q(\mathbb{R}^n)}} \\ 
& = \frac{|\langle \gamma_{n,0,n/q} m, \sum_{j=0}^{J} \mathbbm{i}^{-j} \gamma_{n,j,n/p} Y_j \rangle_{\textup{L}^2(\mathbb{S}^{n-1})}| \,2\log(1/\varepsilon) + o_{n,p,m,J}^{\varepsilon\to0+}(\log(1/\varepsilon))}{ \sigma(\mathbb{S}^{n-1})^{1/p} \|\sum_{j=0}^{J} Y_j\|_{\textup{L}^q(\mathbb{S}^{n-1})} \,2\log(1/\varepsilon) + o_{n,p,m,J}^{\varepsilon\to0+}(\log(1/\varepsilon))} \\
& = \frac{\gamma_{n,0,n/q} |\langle m, \sum_{j=0}^{J} \mathbbm{i}^{-j} \gamma_{n,j,n/p} Y_j \rangle_{\textup{L}^2(\mathbb{S}^{n-1})}| + o_{n,p,m,J}^{\varepsilon\to0+}(1)}{ \sigma(\mathbb{S}^{n-1})^{1/p} \|\sum_{j=0}^{J} Y_j\|_{\textup{L}^q(\mathbb{S}^{n-1})} + o_{n,p,m,J}^{\varepsilon\to0+}(1)} .
\end{align*}
We first take the limit as $\varepsilon\to0+$ to obtain
\begin{equation}\label{eq:propauxlim}
\|T_{m}\|_{\textup{L}^p(\mathbb{R}^n)\to\textup{L}^p(\mathbb{R}^n)} 
\geq \frac{\gamma_{n,0,n/q}}{\sigma(\mathbb{S}^{n-1})^{1/p}}\, \frac{|\langle m, \sum_{j=0}^{J} \mathbbm{i}^{-j} \gamma_{n,j,n/p} Y_j \rangle_{\textup{L}^2(\mathbb{S}^{n-1})}|}{ \|\sum_{j=0}^{J} Y_j\|_{\textup{L}^q(\mathbb{S}^{n-1})} } 
\end{equation}
and then let $J\to\infty$ using conditions \eqref{it:propb} and \eqref{it:propc}.
This proves \eqref{eq:propmainlowsh} and, in combination with \eqref{eq:gammaasympt2}, also \eqref{eq:propmainlow}.

In order to prove \eqref{eq:propweaklow}, assume hypotheses \eqref{it:propa}--\eqref{it:propc} of the theorem with $p=1$, $q=\infty$.
For every pair of conjugated exponents $p\in(1,2]$, $q\in[2,\infty)$ we repeat the prevoius part of the proof leading to \eqref{eq:propauxlim}.
Then we borrow a trick from \cite[Subsection~4.2]{Dra21} (also see \cite[pp.~496--497]{DPV06}) and use a sharp form of the Marcinkiewicz interpolation theorem (see \cite[Theorem~1.3.2]{Gra14book}) together with a trivial estimate on $\textup{L}^2(\mathbb{R}^n)$ to get
\[ \|T_m\|_{\textup{L}^p(\mathbb{R}^n)\to\textup{L}^p(\mathbb{R}^n)} \lesssim \frac{1}{(p-1)^{1/p}} \|T_m\|_{\textup{L}^1(\mathbb{R}^n)\to\textup{L}^{1,\infty}(\mathbb{R}^n)}^{2/p-1} \]
for $1<p<3/2$. Combining this with \eqref{eq:propauxlim}, we conclude
\begin{align}
& \|T\|_{\textup{L}^1(\mathbb{R}^n)\to\textup{L}^{1,\infty}(\mathbb{R}^n)} \nonumber \\
& \gtrsim \lim_{p\to1+} \bigg( (p-1)^{1/p} \,\frac{\gamma_{n,0,n/q}}{\sigma(\mathbb{S}^{n-1})^{1/p}}\, \frac{|\langle m, \sum_{j=0}^{J} \mathbbm{i}^{-j} \gamma_{n,j,n/p} Y_j \rangle_{\textup{L}^2(\mathbb{S}^{n-1})}|}{ \|\sum_{j=0}^{J} Y_j\|_{\textup{L}^q(\mathbb{S}^{n-1})} } \bigg)^{p/(2-p)} \nonumber \\
& = \frac{1}{n}\, \frac{|\langle m, \sum_{j=0}^{J} \mathbbm{i}^{-j} \gamma_{n,j,n} Y_j \rangle_{\textup{L}^2(\mathbb{S}^{n-1})}|}{ \|\sum_{j=0}^{J} Y_j\|_{\textup{L}^{\infty}(\mathbb{S}^{n-1})} }, \label{eq:propauxlim2}
\end{align}
where we used
\begin{align*}
& \lim_{p\to1+} (p-1)^{1/p} \gamma_{n,0,n/q}
= \lim_{p\to1+} (p-1)^{1/p} \pi^{n/2-n/q} \frac{\Gamma(n/2q)}{\Gamma(n/2p)} \\
& = \Big(\lim_{p\to1+} (p-1)^{1/p} q\Big) \Big(\lim_{p\to1+} \frac{2\pi^{n/2-n/q} \Gamma(n/2q+1)}{n\Gamma(n/2p)} \Big)
= \frac{2\pi^{n/2}}{n\Gamma(n/2)}
\end{align*}
and $\sigma(\mathbb{S}^{n-1}) = 2\pi^{n/2}/\Gamma(n/2)$.
Finally, we take limits in \eqref{eq:propauxlim2} as $J\to\infty$ using conditions \eqref{it:propb} and \eqref{it:propc} as before.
\end{proof}

The condition from part \eqref{it:propb} of Theorem~\ref{thm:generallower} is not always easy to verify, since convergence in $\textup{L}^q$ for $q>2$ is typically trickier than $\textup{L}^2$ convergence. The following remark can often be of some help in that matter.

\begin{remark}\label{rem:on_convergence}
If the sequence $(Y_j)_{j=0}^{\infty}$ from Theorem~\ref{thm:generallower} satisfies assumption \eqref{it:propa} and a stronger condition
\begin{equation}\label{eq:condstronger}
\sum_{j=1}^{\infty} j^n \|Y_j\|_{\textup{L}^2(\mathbb{S}^{n-1})}^2 < \infty,
\end{equation}
then convergence claims from \eqref{it:propb} and \eqref{it:propc} are automatically satisfied for every pair of conjugated exponents $p\in[1,2]$ and $q\in[2,\infty]$.
Indeed, recalling \eqref{eq:defcoeffgamma} and \eqref{eq:gammaasympt} we observe
\[ \gamma_{n,j,n/p}^2 \|Y_j\|_{\textup{L}^2(\mathbb{S}^{n-1})}^2 
\lesssim_{n} j^n \|Y_j\|_{\textup{L}^2(\mathbb{S}^{n-1})}^2. \]
Moreover, we apply the endpoint case of Sogge's inequality \eqref{eq:Sogge_est} and the Cauchy--Schwarz inequality to get
\[ \sum_{j=1}^{\infty} \| Y_j \|_{\textup{L}^{\infty}(\mathbb{S}^{n-1})} 
\lesssim_{n} \sum_{j=1}^{\infty} j^{n/2-1} \| Y_j \|_{\textup{L}^{2}(\mathbb{S}^{n-1})} 
\lesssim_{n} \Big(\sum_{j=1}^{\infty} j^n \| Y_j \|_{\textup{L}^2(\mathbb{S}^{n-1})}^2 \Big)^{1/2} . \]
Then we use \eqref{eq:condstronger} and remember that convergence in $\textup{L}^\infty(\mathbb{S}^{n-1})$ implies convergence in every $\textup{L}^q(\mathbb{S}^{n-1})$.

Another interesting observation, which we do not need in the later text, is that, for $1\leq p<2(n+2)/(n+4)$, assumptions \eqref{it:propa} and \eqref{it:propc} imply assumption \eqref{it:propb}. This is easily seen just as before, only applying a larger range of Sogge's estimates \eqref{eq:Sogge_est}.
\end{remark}

Even though Remark~\ref{rem:on_convergence} enables easy verification of the conditions of Theorem~\ref{thm:generallower}, the above reasoning does not necessarily give sharp upper bounds on the quantity $\|u\|_{\textup{L}^{q}(\mathbb{S}^{n-1})}$. Controlling this number sometimes requires significant extra work; see Lemma~\ref{lm:computeu} below.


\section{Proof of the lower bounds in Theorem~\ref{thm:main}}
\label{sec:thm2lower}
In this section we apply Theorem~\ref{thm:generallower} to establish the part~\eqref{part:thmpartb} of Theorem~\ref{thm:main}.
Our symbol $m_{\phi}^{k}$ will be a ``smoothed'' version of
\begin{equation}\label{eq:nonsmoothsymb}
(\xi_1,\ldots,\xi_n) \mapsto \prod_{i=1}^{n/2} \Big(\frac{\xi_{2i-1}+\mathbbm{i}\xi_{2i}}{|\xi_{2i-1}+\mathbbm{i}\xi_{2i}|}\Big)^{k} .
\end{equation}

On the one hand, this smoothing is required for two reasons.
First, $\textup{C}^\infty$ smoothness was imposed in the formulation of Problem~\ref{prob:Mazya}, which we we address. Second, for $n\geq4$ the non-smooth symbol appearing in \eqref{eq:nonsmoothsymb} is singular on the union of two-dimensional coordinate planes, so the part~\eqref{part:thmparta} of Theorem~\ref{thm:main} does not apply and we do not even have clear upper bounds for the associated multiplier operators.

On the other hand, smoothing of the symbol significantly complicates analysis of lower bounds by destroying the tensor product structure. These complications are detailed in Remark~\ref{rem:smoothing}, which rules out the possibility of testing $T_{\phi}^{k}$ on examples of functions that are elementary tensor products with respect to $\mathbb{R}^2\times\cdots\times\mathbb{R}^2$.

In accordance with Remark~\ref{rem:duality}, in the remaining text we always assume
\[ \lambda\geq1, \quad p\in[1,2], \quad q\in[2,\infty], \]
and that $p$ and $q$ are conjugate exponents.

\subsection{Two dimensions}
This short subsection is logically redundant, both because the next subsection covers all even-dimensional spaces $\mathbb{R}^n$, and because Theorem~\ref{thm:cos} provides yet another example of a phase that leads to the ``worst possible'' asymptotics. 
We include it for reader's convenience: to illustrate the main idea of proof with absence of many subtle technical complications arising in dimensions $n\geq4$.

Take $\delta\in(0,\pi)$; we will choose it a bit later. Define $\phi$ in polar coordinates as
\[ \phi(e^{\mathbbm{i}\varphi}) := \varphi; \quad \varphi\in(-\pi+\delta,\pi-\delta) . \]
This still leaves $\phi\colon\mathbb{S}^1\to\mathbb{R}$ undefined on a circular arc of length $2\delta$, but we can define it arbitrarily there, only taking care that $\phi$ is $\textup{C}^{\infty}$ on the whole circle $\mathbb{S}^1$.
For a positive integer $k$ we also define
\[ v^{(k)}(e^{\mathbbm{i}\varphi}) := e^{\mathbbm{i}k\varphi}; \quad \varphi\in\mathbb{R}. \]
This is clearly a spherical harmonic on $\mathbb{S}^1$ of degree $k$, as it is obtained by restricting the harmonic polynomial $(\xi_1,\xi_2) \mapsto (\xi_1+\mathbbm{i}\xi_2)^k$ to the unit circle. With the intention of applying Theorem~\ref{thm:generallower}, we also set
\[ u^{(k)} := \mathbbm{i}^k \gamma_{2,k,2/p}^{-1} v^{(k)}, \]
so that functions $u=u^{(k)}$ and $v=v^{(k)}$ trivially satisfy assumptions \eqref{it:propa}--\eqref{it:propc}.
Note that for every $\varphi\in(-\pi+\delta,\pi-\delta)$ we have
\[ m_{\phi}^{k}(e^{\mathbbm{i}\varphi}) = e^{\mathbbm{i}k\phi(e^{\mathbbm{i}\varphi})} = e^{\mathbbm{i}k\varphi} = v^{(k)}(e^{\mathbbm{i}\varphi}), \]
so, in particular,
\[ \|m_{\phi}^{k}-v^{(k)}\|_{\textup{L}^2(\mathbb{S}^1)} \leq 2\sqrt{\delta}. \]
For any $p\in[1,2]$ we now have
\begin{align*}
\frac{|\langle m_{\phi}^{k}, v^{(k)} \rangle_{\textup{L}^2(\mathbb{S}^{1})}|}{\|u^{(k)}\|_{\textup{L}^{q}(\mathbb{S}^{1})}}
& \geq \frac{\|v^{(k)}\|_{\textup{L}^2(\mathbb{S}^{1})}^2 - \|m_{\phi}^{k} - v^{(k)}\|_{\textup{L}^2(\mathbb{S}^{1})} \|v^{(k)}\|_{\textup{L}^2(\mathbb{S}^{1})}}{\gamma_{2,k,2/p}^{-1} \|v^{(k)}\|_{\textup{L}^{q}(\mathbb{S}^{1})}} \\
& \geq (\sqrt{2\pi} - 2\sqrt{\delta}) (2\pi)^{1/2-1/q} \gamma_{2,k,2/p}.
\end{align*}
Finally, we take $\delta=\pi/8$ and recall $\gamma_{2,k,2/p}\sim k^{2/p-1}$, by \eqref{eq:gammaasympt}. 
Theorem~\ref{thm:generallower} applies, so \eqref{eq:propmainlow} and \eqref{eq:propweaklow} respectively give
\[ \|T_{\phi}^{k}\|_{\textup{L}^p(\mathbb{R}^2)\to\textup{L}^p(\mathbb{R}^2)} \gtrsim (q-1) k^{2/p-1} = (q-1) \,k^{2(1/p-1/2)} \]
for $p\in(1,2]$ and
\[ \|T_{\phi}^{k}\|_{\textup{L}^1(\mathbb{R}^2)\to\textup{L}^{1,\infty}(\mathbb{R}^2)} \gtrsim k . \]
These are precisely the desired two-dimensional cases of \eqref{eq:mainlower} and \eqref{eq:weaklower}.

\subsection{Higher dimensions}
\label{subsec:even}
Suppose that $n=2r$ for a positive integer $r$.
We will consider a slightly non-standard coordinatization of $\mathbb{S}^{2r-1}\subset\mathbb{R}^{2r}$.
Denote
\[ \mathbb{S}^{r-1}_{+} := \{ (\omega_1,\ldots,\omega_r)\in(0,\infty)^r : \omega_1^2+\cdots+\omega_r^2=1 \}, \quad D_{+} := (-\pi,\pi)^r \times \mathbb{S}^{r-1}_{+}. \]
Transformation $\Psi \colon D_{+} \to \mathbb{S}^{2r-1}$ is defined as
\[ \Psi(\varphi_1,\ldots,\varphi_r,\omega) = (\omega_1\cos\varphi_1,\, \omega_1\sin\varphi_1,\, \ldots,\, \omega_r\cos\varphi_r,\, \omega_r\sin\varphi_r), \]
where $\varphi_1,\ldots,\varphi_r\in(-\pi,\pi)$ and $\omega=(\omega_1,\ldots,\omega_r)\in\mathbb{S}^{r-1}_{+}$.
This is a $\textup{C}^\infty$ diffeomorphism onto its image, and the complement of its image is a negligible subset of $\mathbb{S}^{2r-1}$ with respect to the surface measure $\sigma_{2r-1}$.
Moreover, this parametrization $\Psi$ enables us to write the infinitesimal element of the surface measure on $\mathbb{S}^{2r-1}$ as
\begin{equation}\label{eq:PsiJacobian}
\textup{d}\varphi_1 \cdots \textup{d}\varphi_r \,\omega_1 \cdots \omega_r \,\textup{d}\sigma_{r-1}(\omega) .
\end{equation}
In the case $r=1$ one simply needs to disregard any occurrence of $\mathbb{S}^{r-1}_{+}$.

For any $\delta>0$ we also denote
\[ \mathbb{S}^{r-1}_{\delta}:=\mathbb{S}^{r-1}_{+}\cap(\delta,\infty)^r, \quad D_\delta := (-\pi+\delta,\pi-\delta)^r \times \mathbb{S}^{r-1}_{\delta}. \]
Now fix a parameter $0<\delta<1/r$ (to be chosen later) and define $\Phi\colon D_{+}\to\mathbb{R}$ by setting
\[ \Phi(\varphi_1,\ldots,\varphi_r,\omega) :=
\begin{cases}
\varphi_1 + \cdots + \varphi_r & \text{on } D_{\delta}, \\
0 & \text{on } D_{+}\setminus D_{\delta/2},
\end{cases} \]
and choosing its values on $D_{\delta/2}\setminus D_{\delta}$ quite arbitrarily, only taking care that $\Phi$ remains $\textup{C}^{\infty}$.
We can finally define the desired phase function $\phi\colon\mathbb{S}^{2r-1}\to\mathbb{R}$ as
\[ \phi := \begin{cases}
\Phi\circ\Psi^{-1} & \text{on } \Psi(D_{+}), \\
0 & \text{on } \mathbb{S}^{2r-1}\setminus\Psi(D_{+}).
\end{cases} \]
Since the composition $\Phi\circ\Psi^{-1}$ vanishes outside the closure of $\Psi(D_{\delta/2})\subset\mathbb{S}^{2r-1}$, we clearly see that $\phi$ is $\textup{C}^\infty$ on the whole sphere.
We can call
\[ M_\delta := \Big\{\xi\in\mathbb{R}^n\setminus\{\mathbf{0}\} \,:\, \frac{\xi}{|\xi|}\in \Psi(D_\delta)\Big\} \]
the \emph{major cone}, while its complement $\mathbb{R}^n\setminus M_\delta$ is a certain \emph{exceptional set}.
For a positive integer $k$ and any $\xi\in M_\delta$, denoting $\xi/|\xi|=\Psi(\varphi_1,\ldots,\varphi_r,\omega)$ we can write
\[ m_{\phi}^{k}(\xi) = e^{\mathbbm{i} k \phi(\xi/|\xi|)} = e^{\mathbbm{i} k \Phi(\varphi_1,\ldots,\varphi_r,\omega)} 
= \prod_{j=1}^{r} e^{\mathbbm{i} k \varphi_j} , \]
so for every $\xi\in M_\delta$ the symbol simplifies as
\begin{equation}\label{eq:finalsymbol}
m_{\phi}^{k}(\xi) = \prod_{i=1}^{r} \Big(\frac{\xi_{2i-1}+\mathbbm{i}\xi_{2i}}{|\xi_{2i-1}+\mathbbm{i}\xi_{2i}|}\Big)^{k}.
\end{equation}
Once again we remark that it was necessary to smooth out the expression \eqref{eq:finalsymbol} on the exceptional set.
For a generic point $\xi=(\xi_1,\xi_2,\ldots,\xi_{2r-1},\xi_{2r})\in\mathbb{R}^{2r}$ we will also write
\[ \zeta_i := \xi_{2i-1} + \mathbbm{i}\xi_{2i}; \quad j=1,2,\ldots,r \]
and identify pairs $(\xi_{2i-1},\xi_{2i})$ with complex numbers $\zeta_i$.

We choose a particular $\textup{L}^2$ function on the sphere, $\widetilde{m}^{(k)}$, acting on $r$ complex variables and defined as
\[ \widetilde{m}^{(k)}(\zeta_1,\ldots,\zeta_r) := \prod_{i=1}^{r} \Big(\frac{\zeta_i}{|\zeta_i|}\Big)^{k} . \]
The reader can recognize it simply as the right hand side of \eqref{eq:finalsymbol}, i.e., the non-smooth version of the symbol.
Every homogeneous polynomial on $\mathbb{R}^{2r}$ can be written as a homogeneous polynomial in terms of $\zeta_1$, $\bar{\zeta_1}$, \ldots, $\zeta_r$, $\bar{\zeta_r}$.  
By integrating over the sphere we see that $\widetilde{m}^{(k)}$ is orthogonal in $\textup{L}^2(\mathbb{S}^{2r-1})$ to any such monomial that is \underline{not} of the form
\begin{align}
P_{k_1,\ldots,k_r}(\zeta_1,\ldots,\zeta_r) & := \zeta_1^{k+k_1} \bar{\zeta_1}^{k_1} \cdots \zeta_r^{k+k_r} \bar{\zeta_r}^{k_r} \nonumber \\
& = \bigg(\prod_{j=1}^{r} \Big(\frac{\zeta_j}{|\zeta_j|}\Big)^{k}\bigg)\, |\zeta_1|^{k+2k_1}\cdots |\zeta_r|^{k+2k_r} \label{eq:harmform}
\end{align}
for some nonnegative integers $k_1,\ldots,k_r$.
In other words, spherical harmonics from the orthogonal expansion of $\widetilde{m}^{(k)}$ are necessarily linear combinations of \eqref{eq:harmform}.
Also note that $P_{k_1,\ldots,k_r}$ has degree $rk+2k_1+\cdots+2k_r$.

By the previous discussion, the expansion of $\widetilde{m}^{(k)}$ into spherical harmonics is of the form
\[ \widetilde{m}^{(k)} = \sum_{j=rk}^{\infty} \widetilde{Y}_{j}^{(k)}, \]
where each $\widetilde{Y}_{j}^{(k)}$ is a spherical harmonic of degree $j$.
Let us set
\[ Y_{j}^{(k)} := \mathbbm{i}^j \gamma_{n,j,0} \widetilde{Y}_{j}^{(k)} \]
for every integer $j\geq rk$ and define another spherical function by
\begin{equation}\label{eq:seriesdefu}
u^{(k)} := \sum_{j=rk}^{\infty} Y_{j}^{(k)}.
\end{equation}

\begin{lemma}\label{lm:computeu}
Functions $u^{(k)}$ satisfy
\begin{equation}\label{eq:ubound}
\|u^{(k)}\|_{\textup{L}^{1}(\mathbb{S}^{2r-1})} 
\sim_r \|u^{(k)}\|_{\textup{L}^{2}(\mathbb{S}^{2r-1})} 
\sim_r \|u^{(k)}\|_{\textup{L}^{\infty}(\mathbb{S}^{2r-1})} \sim_r k^{-r}
\end{equation}
for every positive integer $k$.
\end{lemma}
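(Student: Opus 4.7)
The proof plan has four main steps.

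\textbf{Paragraph 1 (Reduction via symmetry).}
The product structure $\widetilde{m}^{(k)}(\zeta_1,\ldots,\zeta_r)=\prod_j(\zeta_j/|\zeta_j|)^k$ makes $\widetilde{m}^{(k)}$ transform under the diagonal rotation $R_\theta\colon\zeta_j\mapsto e^{\mathbbm{i}\theta_j}\zeta_j$ by the scalar $\prod_j e^{\mathbbm{i}k\theta_j}$, and the same symmetry is inherited term by term by every spherical harmonic $\widetilde{Y}_j^{(k)}$ in its expansion, hence by $u^{(k)}$ itself. In the $\Psi$ coordinates this forces
\[ u^{(k)}(\Psi(\psi_1,\ldots,\psi_r,\omega))=\Big(\prod_{j=1}^r e^{\mathbbm{i}k\psi_j}\Big)\,h(\omega) \]
for a single scalar function $h\colon\mathbb{S}^{r-1}_+\to\mathbb{C}$, so that by the Jacobian \eqref{eq:PsiJacobian}
\[ \|u^{(k)}\|_{\textup{L}^p(\mathbb{S}^{2r-1})}^p=(2\pi)^r\int_{\mathbb{S}^{r-1}_+}|h(\omega)|^p\,\omega_1\cdots\omega_r\,d\sigma_{r-1}(\omega). \]

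\textbf{Paragraph 2 (Integral representation of $h$).}
By \eqref{eq:FT_general_SW}, $u^{(k)}$ coincides (up to an overall sign $(-1)^{rk}$ coming from the parity of the harmonics, which is irrelevant for absolute values) with $\widehat{K}$ for $K:=\operatorname{p.v.}\widetilde{m}^{(k)}(x/|x|)|x|^{-n}$. Evaluate $\widehat K$ at $\xi_*=(\omega_1,0,\omega_2,0,\ldots,\omega_r,0)$; tensorize the kernel via $|x|^{-n}=\Gamma(r)^{-1}\int_0^\infty\prod_j e^{-\beta|\zeta_j|^2}\beta^{r-1}\,d\beta$ and use the Jacobi--Anger identity $\int_0^{2\pi}e^{\mathbbm{i}k\theta-\mathbbm{i}z\cos\theta}\,d\theta=2\pi\mathbbm{i}^{-k}J_k(z)$ on each of the $r$ planar integrals. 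After exchanging orders and substituting $t=\beta^{-1/2}$,
\[ h(\omega)=\frac{2(2\pi\mathbbm{i}^{-k})^r}{\Gamma(r)}\int_0^\infty\prod_{j=1}^r I_k(t\omega_j)\,\frac{dt}{t},\qquad I_k(s):=\int_0^\infty\rho e^{-\rho^2}J_k(2\pi\rho s)\,d\rho. \]
Recognizing $I_k$ as a confluent hypergeometric function and applying the Euler representation of ${}_1F_1$ then yields
\[ h(\omega)=\frac{\pi^r\mathbbm{i}^{-rk}\,\Gamma(rk/2)}{\Gamma(r)\,\Gamma(k/2)^r}\,\prod_j\omega_j^k\cdot\int_{[0,1]^r}\frac{\prod_j t_j^{k/2-1}(1-t_j)^{k/2}}{\big(1-\sum_j\omega_j^2 t_j\big)^{rk/2}}\,dt. \]

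\textbf{Paragraph 3 (Laplace asymptotic).}
The main obstacle is extracting the sharp large-$k$ behavior of the inner integral \emph{uniformly} in $\omega\in\mathbb{S}^{r-1}_+$. Via the successive changes of variable $t_j=1-u_j$ and then $u_j=Vw_j/\omega_j^2$, with $V>0$ and $w$ in the $(r-1)$-simplex $\Delta^{r-1}=\{w\in\mathbb{R}^r_+:\sum_j w_j=1\}$ (Jacobian $V^{r-1}\prod_j\omega_j^{-2}$), Laplace's method localizes the $w$-integral to a Dirichlet-type contribution of order $r^{-rk/2}\cdot k^{-(r-1)/2}$ at $w_j=1/r$, while the $V$-integral has exponential decay rate $\tfrac{k}{2r}\sum_j\omega_j^{-2}$ and contributes of order $(kS)^{-r}$ with $S:=\sum_j\omega_j^{-2}$. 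Combined with Stirling on the prefactor ($\Gamma(rk/2)/\Gamma(k/2)^r\sim r^{rk/2}k^{(r-1)/2}$) the dependences on $k^{(r-1)/2}$, $r^{rk/2}$, and $\prod_j\omega_j^k$ all cancel, leaving
\[ h(\omega)\;\sim\;\frac{(2\pi\mathbbm{i}^{-k})^r}{k^r\cdot\prod_j\omega_j^2\cdot\big(\sum_j\omega_j^{-2}\big)^r}. \]
The delicate point is that the log-integrand's interior maximum is \emph{degenerate}: it lies along a one-parameter ray rather than at an isolated critical point, which is precisely what forces the split into the scale variable $V$ and the transverse simplex coordinates $w$.

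\textbf{Paragraph 4 (Conclusion for all three norms).}
By AM--GM, $\prod_j\omega_j^2\cdot\big(\sum_j\omega_j^{-2}\big)^r\geq r^r$ with equality at the center $\omega_j=1/\sqrt{r}$, so $\|h\|_{\textup{L}^\infty(\mathbb{S}^{r-1}_+)}\sim_r k^{-r}$ and hence $\|u^{(k)}\|_{\textup{L}^\infty(\mathbb{S}^{2r-1})}\sim_r k^{-r}$. For $p\in\{1,2\}$, feeding the pointwise asymptotic into the reduction of Paragraph 1 leaves a $k$-independent weighted integral
\[ \int_{\mathbb{S}^{r-1}_+}(\omega_1\cdots\omega_r)^{1-2p}\big(\textstyle\sum_j\omega_j^{-2}\big)^{-pr}\,d\sigma_{r-1}(\omega), \]
which is a finite positive constant depending only on $r$: near any boundary face $\omega_j=0$ one has $(\sum_j\omega_j^{-2})^{-pr}\sim\omega_j^{2pr}$ dominating the blow-up $\omega_j^{1-2p}$, yielding the integrable exponent $1+2p(r-1)\geq 1$. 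This gives the remaining estimates $\|u^{(k)}\|_{\textup{L}^1}\sim_r\|u^{(k)}\|_{\textup{L}^2}\sim_r k^{-r}$. Finitely many small values of $k$ are handled separately by positivity ($u^{(k)}\not\equiv 0$ for each $k\geq 1$, by $\widehat{K}\not\equiv 0$).
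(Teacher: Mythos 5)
Your Paragraphs 1--2 reproduce, in slightly different normalization, exactly the first half of the paper's argument: the torus-equivariance giving $u^{(k)}=(\prod_je^{\mathbbm{i}k\psi_j})h(\omega)$, the identification with $\widehat{K}$ via \eqref{eq:FT_general_SW} (your parity remark $i^{j}=(-1)^{rk}i^{-j}$ is correct since only degrees $j\equiv rk\pmod 2$ occur), the Gaussian superposition of $|x|^{-n}$, Bessel's integral, and the reduction to a one-dimensional profile integrated against $\textup{d}t/t$; your closed form for $h$ via the confluent hypergeometric function matches the paper's \eqref{eq:Fmagicf0} after performing the $t$-integration, and I checked that your claimed pointwise asymptotic $h(\omega)\sim C_r\,k^{-r}(\prod_j\omega_j^2)^{-1}(\sum_j\omega_j^{-2})^{-r}$ is consistent with the limit $F_k(s)\to\tfrac{1}{4s^2}e^{-1/8s^2}$ implicit in the paper's formulas (and with the $r=1$ case). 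Where you genuinely diverge is the second half: the paper never computes an asymptotic for $h$ at all. It instead proves the soft, uniform-in-$k$ bounds $0\leq F_k(s)\lesssim\min\{s^2,s^{-2}\}$ and $\int_{1/R}^{R}F_k\gtrsim 1$, which give $\|u^{(k)}\|_{\infty}\lesssim_rk^{-r}$ immediately and $\|u^{(k)}\|_{1}\gtrsim_rk^{-r}$ after the substitution $x_i=t\omega_i$. Your route, if completed, is stronger (it identifies the limiting profile of $u^{(k)}$), but it is also strictly harder.

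The gap is Paragraph 3. The conclusion in Paragraph 4 consumes a \emph{two-sided} asymptotic with error controlled \emph{uniformly over all of} $\mathbb{S}^{r-1}_{+}$ (the $L^\infty$ upper bound and the integration of $|h|^p$ against $\omega_1\cdots\omega_r$ both require it near the boundary faces), yet Paragraph 3 only asserts the leading term. You correctly name uniformity as ``the main obstacle'' and correctly identify the degenerate (ray-type) maximum forcing the split into $V$ and $w$, but the actual estimates are not carried out, and they are not routine precisely where they are needed: as some $\omega_j\to0$ the $V$-integration domain $\{V\leq\min_j\omega_j^2/w_j\}$ collapses, the rate $\sum_jw_j/\omega_j^2$ blows up, and the function $(\sum_jw_j/\omega_j^2)^{-r}$ multiplying the Dirichlet weight $\prod_jw_j^{k/2}$ varies over the Laplace window in an $\omega$-dependent way. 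One must show the error terms are $o(1)$ relative to the main term uniformly in $\omega$ (or at least obtain a uniform upper bound of the claimed form plus a matching lower bound on a fixed compact subset of $\mathbb{S}^{r-1}_{+}$, which is really all Paragraph 4 needs). As written, the hardest step of the proof is a statement of intent; I would either supply the uniform Laplace estimates in full or replace Paragraphs 3--4 by the paper's softer device of bounding the one-variable profile by $\min\{s^2,s^{-2}\}$ and controlling its $\textup{L}^1$ mass, which sidesteps the boundary degeneration entirely.
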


\begin{proof}
By property \eqref{eq:FT_general_SW}, the Fourier transform of
\[ K(\xi) = \operatorname{p.v.} \overline{\widetilde{m}^{(k)}\Big(\frac{\xi}{|\xi|}\Big)} \,|\xi|^{-n} \]
is given by
\begin{equation}\label{eq:magicfourier}
\widehat{K}(\xi) = \overline{u^{(k)}\Big(\frac{\xi}{|\xi|}\Big)} .
\end{equation}
Thus, once we compute $\widehat{K}$, we will be able to read off the function $u^{(k)}$ from \eqref{eq:magicfourier}.
The following calculations are much in the spirit of the proofs of Theorems~3.3 and 3.10 from \cite[Chapter~IV]{SW71Book}.

Recalling
\[ K(\zeta_1,\ldots,\zeta_r) = \operatorname{p.v.} \bigg(\prod_{i=1}^{r} \Big(\frac{\zeta_i}{|\zeta_i|}\Big)^{-k}\bigg) \Big(\sum_{i=1}^{r}|\zeta_i|^2\Big)^{-r} \]
and using
\begin{equation*}
\big(|\zeta_1|^2+\cdots+|\zeta_r|^2\big)^{-r} = \frac{2\pi^{r}}{(r-1)!} \int_{0}^{\infty} e^{-\pi t^{-2} (|\zeta_1|^2+\cdots+|\zeta_r|^2)} \,\frac{\textup{d}t}{t^{2r+1}},
\end{equation*}
we can write
\[ K(\zeta_1,\ldots,\zeta_r) = \frac{2\pi^{r}}{(r-1)!} \int_{0}^{\infty} \bigg(\prod_{i=1}^{r} \Big(\frac{\zeta_i}{|\zeta_i|}\Big)^{-k} e^{-\pi t^{-2} |\zeta_i|^2}\bigg) \,\frac{\textup{d}t}{t^{2r+1}}. \]
Let us first compute the Fourier transform of an auxiliary function
\[ f\colon\mathbb{C}\to\mathbb{C}, \quad f(\zeta) = \Big(\frac{\zeta}{|\zeta|}\Big)^{-k} e^{-\pi t^{-2} |\zeta|^2} \]
by changing variables
\[ \zeta = \rho e^{i\varphi}, \ \zeta' = \rho' e^{i\varphi'}; \quad \rho,\rho'\in(0,\infty), \ \varphi,\varphi'\in[0,2\pi) \]
and writing
{\allowdisplaybreaks
\begin{align*}
\widehat{f}(\zeta') & = \int_{\mathbb{C}} f(\zeta) e^{-2\pi \mathbbm{i} \zeta\cdot\zeta'} \,\textup{d}\zeta \\
& = \int_{0}^{\infty} \int_{0}^{2\pi}  e^{-\mathbbm{i}k\varphi} e^{-\pi t^{-2} \rho^2} e^{-2\pi\mathbbm{i}\rho \rho' \cos(\varphi-\varphi')} \rho \,\textup{d}\rho \,\textup{d}\varphi \\
& \quad \big[\text{change variable $\varphi\rightarrow\varphi+\varphi'+\pi/2$}\big] \\
& = \mathbbm{i}^{-k} e^{-\mathbbm{i}k\varphi'} \int_{0}^{\infty} \Big(\int_{0}^{2\pi}e^{-\mathbbm{i}k\varphi+2\pi\mathbbm{i}\rho \rho' \sin\varphi}\,\textup{d}\varphi\Big) e^{-\pi t^{-2} \rho^2} \rho \,\textup{d}\rho \\
& = \mathbbm{i}^{-k} e^{-\mathbbm{i}k\varphi'} 2\pi \int_{0}^{\infty} J_{k}(2\pi\rho\rho') e^{-\pi t^{-2} \rho^2} \rho \,\textup{d}\rho \\
& = \mathbbm{i}^{-k} \Big(\frac{\zeta'}{|\zeta'|}\Big)^{-k} 2\pi t^2 \int_{0}^{\infty} J_{k}(2\pi \rho t|\zeta'|) e^{-\pi \rho^2} \rho \,\textup{d}\rho .
\end{align*}
}
Here $J_{k}$ denotes the Bessel function of the first kind of an integer order $k$ and we used Bessel's integral formula,
\begin{equation}\label{eq:Besselintegral}
J_k(x) = \frac{1}{2\pi}\int_{-\pi}^{\pi} e^{\mathbbm{i}(x\sin\tau-k\tau)} \,\textup{d}\tau
= \frac{\mathbbm{i}^{-k}}{2\pi}\int_{-\pi}^{\pi} e^{\mathbbm{i}(x\cos\tau-k\tau)} \,\textup{d}\tau; \quad x\in(0,\infty);
\end{equation}
see \cite[Eq.~10.9.2]{NIST} or \cite[Eq.~9.1.21]{ASbook64}.
Justifying the interchange of the integral in $t$ and the action the Fourier transform in $\zeta_i$ as in the proof of \cite[Chapter~IV, Theorem~4.5]{SW71Book}, we see that
{\allowdisplaybreaks
\begin{align*}
\widehat{K}(\zeta_1,\ldots,\zeta_r) 
& = \mathbbm{i}^{-rk} \bigg(\prod_{i=1}^{r} \Big(\frac{\zeta_i}{|\zeta_i|}\Big)^{-k}\bigg) \frac{2^{r+1}\pi^{2r}}{(r-1)!} \int_{0}^{\infty} \Big( \prod_{i=1}^{r} \int_{0}^{\infty} J_{k}(2\pi \rho t|\zeta_i|) e^{-\pi \rho^2} \rho \,\textup{d}\rho \Big) \,\frac{\textup{d}t}{t} \\
& \quad \big[\text{change variable $t\rightarrow t/|\xi|$, where $\xi=(\zeta_1,\ldots,\zeta_r)$}\big] \\
& = \mathbbm{i}^{-rk} \bigg(\prod_{i=1}^{r} \Big(\frac{\zeta_i}{|\zeta_i|}\Big)^{-k}\bigg)  \frac{2^{r+1}\pi^{2r}}{(r-1)!} \int_{0}^{\infty} \bigg( \prod_{i=1}^{r} \int_{0}^{\infty} J_{k}\Big(\frac{2\pi \rho t|\zeta_i|}{|\xi|}\Big) e^{-\pi \rho^2} \rho \,\textup{d}\rho \bigg) \,\frac{\textup{d}t}{t},
\end{align*}
}
so comparing with \eqref{eq:magicfourier} we obtain
{\allowdisplaybreaks
\begin{align}
u^{(k)}(\zeta_1,\ldots,\zeta_r)
& = \mathbbm{i}^{rk} \bigg(\prod_{i=1}^{r} \Big(\frac{\zeta_i}{|\zeta_i|}\Big)^{k}\bigg) 
\frac{2^{r+1}\pi^{2r}}{(r-1)!} \int_{0}^{\infty} \Big( \prod_{i=1}^{r} \int_{0}^{\infty} J_{k}(2\pi \rho t|\zeta_i|) e^{-\pi \rho^2} \rho \,\textup{d}\rho \Big) \,\frac{\textup{d}t}{t} \nonumber \\
& \quad \big[\text{change variables $\rho\rightarrow\rho/\sqrt{2\pi k}$, $t\rightarrow kt\sqrt{2/\pi}$}\big] \nonumber \\
& = \frac{2\pi^{r} \mathbbm{i}^{rk}}{(r-1)!}  k^{-r} \bigg(\prod_{i=1}^{r} \Big(\frac{\zeta_i}{|\zeta_i|}\Big)^{k}\bigg) \int_{0}^{\infty} \Big( \prod_{i=1}^{r} \int_{0}^{\infty} J_{k}\big(2 \sqrt{k} \rho t|\zeta_i|\big) e^{-\rho^2/2k} \rho \,\textup{d}\rho \Big) \,\frac{\textup{d}t}{t} \label{eq:utoF}
\end{align}
}
for every $(\zeta_1,\ldots,\zeta_r)\in\mathbb{S}^{2r-1}$.

Define $F_k\colon[0,\infty)\to\mathbb{R}$ by
\begin{equation}\label{eq:intdefF1}
F_k(s) := \int_{0}^{\infty} J_{k}\big(2 \sqrt{k} \rho s\big) e^{-\rho^2/2k} \rho \,\textup{d}\rho .
\end{equation}
A simple change of variable gives
\begin{equation}\label{eq:intdefF2}
F_k(s) = \int_{0}^{\infty} J_{k}(\rho) \frac{\rho}{4ks^2} e^{-\rho^2/8k^2s^2}  \,\textup{d}\rho
\end{equation}
for $s\in(0,\infty)$, while $F_k(0)=0$.
Let us show the following uniform bound in $k$,
\begin{equation}\label{eq:Fclaim1}
0\leq F_k(s) \lesssim \min\{s^2,s^{-2}\} \quad\text{for }s\in(0,\infty),\ k\geq3,
\end{equation}
and the equality
\begin{equation}\label{eq:Fclaim2}
\|F_k\|_{\textup{L}^1(0,\infty)} = \int_{0}^{\infty} F_k(s) \,\textup{d}s = \frac{1}{2}\sqrt{\frac{\pi}{2}} \quad\text{for }k\geq1.
\end{equation}

We first address \eqref{eq:Fclaim1}.
Using \cite[Eq.~10.22.54]{NIST} we can evaluate the integral \eqref{eq:intdefF2} in terms of the confluent hypergeometric function $\mathbf{M}$ as
\[ F_k(s) = 2 \Gamma\Big(\frac{k}{2}+1\Big) (2k^2s^2)^{(k+2)/2} e^{-2k^2s^2} \mathbf{M}\Big(\frac{k}{2},k+1,2k^2s^2\Big) . \]
Then we use the integral representation of $\mathbf{M}$ from \cite[Eq.~13.4.1]{NIST} and simplify to get a convenient formula
\begin{equation}\label{eq:Fmagicf0}
F_k(s) = \frac{2^{k/2}k^{k+1}s^k}{\Gamma(k/2)} \int_0^1 e^{-2k^2s^2\tau} \tau^{k/2} (1-\tau)^{k/2-1} \,\textup{d}\tau.
\end{equation}
Using $1-\tau\leq e^{-\tau}$ for $0\leq \tau\leq 1$ we bound \eqref{eq:Fmagicf0} for $k\geq3$ as
{\allowdisplaybreaks
\begin{align*}
F_k(s) & \leq \frac{2^{k/2}k^{k+1}s^k}{\Gamma(k/2)} \int_{0}^{\infty} e^{-2k^2s^2\tau} \tau^{k/2} (e^{-\tau})^{k/2-1} \,\textup{d}\tau \\
& \quad \big[\text{change variable $\tau\rightarrow 2\tau/(4k^2s^2+k-2)$}\big]  \\
& = \frac{2^{k/2}k^{k+1}s^k}{\Gamma(k/2)} \Big(\frac{2}{4k^2s^2+k-2}\Big)^{k/2+1} \Gamma\Big(\frac{k}{2}+1\Big) \\
& = \frac{1}{4s^2} \Big(1 + \frac{k-2}{4k^2 s^2}\Big)^{-k/2-1} .
\end{align*}
}
Bernoulli's inequality gives
\[ \Big(1 + \frac{k-2}{4k^2 s^2}\Big)^{k/4+1/2} \geq 1 + \frac{k^2-4}{16k^2s^2} \geq 1 + \frac{1}{40s^2}, \]
which further controls $F_k(s)$ as
\[ F_k(s) \leq \frac{1}{4s^2} \Big(1 + \frac{1}{40s^2}\Big)^{-2} \leq \min\Big\{400s^2,\frac{1}{4s^{2}}\Big\}. \]
Next, in order to verify \eqref{eq:Fclaim2}, we use \eqref{eq:Fmagicf0} to write $F_k$ as a superposition of the functions 
\[ (0,\infty)\ni s \mapsto s^k e^{-2k^2 \tau s^2}. \]
Integrals of these functions easily compute to
\[ \frac{\Gamma((k+1)/2)}{2^{(k+3)/2} k^{k+1} \tau^{(k+1)/2}}. \]
It remains to evaluate the integral in $\tau$ of this quantity multiplied with a weight appearing in \eqref{eq:Fmagicf0}, thus obtaining the right hand side of \eqref{eq:Fclaim2}. 
This completes the proofs of the auxiliary claims \eqref{eq:Fclaim1} and \eqref{eq:Fclaim2}.
Moreover, by \eqref{eq:Fclaim1} we have
\[ \int_{(0,\infty)\setminus[1/R,R]} F_k(s) \,\textup{d}s < \frac{1}{10} \]
for a sufficiently large number $R>1$ that depends only on the implicit constant in \eqref{eq:Fclaim1}.
Combining this with \eqref{eq:Fclaim2} we also get
\begin{equation}\label{eq:Fclaim3}
\int_{1/R}^{R} F_k(s) \,\textup{d}s \gtrsim 1 \quad\text{for } k\geq1.
\end{equation}

We have all elements to finalize the proof of the lemma.
Note that \eqref{eq:utoF} and the definition \eqref{eq:intdefF1} give
\begin{align}
\|u^{(k)}\|_{\textup{L}^{\infty}(\mathbb{S}^{2r-1})}
& \lesssim_r k^{-r} \sup_{(\zeta_1,\ldots,\zeta_r)\in\mathbb{S}^{2r-1}} \int_{0}^{\infty} \prod_{i=1}^{r} F_k(t|\zeta_i|) \,\frac{\textup{d}t}{t} \nonumber \\
& = k^{-r} \sup_{(\omega_1,\ldots,\omega_r)\in\mathbb{S}^{r-1}_{+}} \int_{0}^{\infty} \prod_{i=1}^{r} F_k(t\omega_i) \,\frac{\textup{d}t}{t}. \label{eq:Faux0}
\end{align}
Fix an arbitrary point $(\omega_1,\ldots,\omega_r)\in\mathbb{S}^{r-1}_{+}$. Let $\omega_{\max}$ be the largest number among $\omega_1,\ldots,\omega_r$.
Clearly $\omega_{\max}\sim_r 1$. By \eqref{eq:Fclaim1} we have 
\[ \int_{0}^{1} \prod_{i=1}^{r} F_k(t\omega_i) \,\frac{\textup{d}t}{t}
\lesssim_r \int_{0}^{1} F_k(t\omega_{\max}) \,\frac{\textup{d}t}{t} 
\lesssim \int_{0}^{1} t \,\frac{\textup{d}t}{t} \lesssim 1 \]
and
\[ \int_{1}^{\infty} \prod_{i=1}^{r} F_k(t\omega_i) \,\frac{\textup{d}t}{t}
\lesssim_r \int_{1}^{\infty} F_k(t\omega_{\max}) \,\frac{\textup{d}t}{t} 
\lesssim \int_{1}^{\infty} t^{-2} \,\frac{\textup{d}t}{t} \lesssim 1, \]
so that \eqref{eq:Faux0} guarantees
\begin{equation}\label{eq:lmauxcl1}
\|u^{(k)}\|_{\textup{L}^{\infty}(\mathbb{S}^{2r-1})}
\lesssim_r k^{-r}.
\end{equation}

Next, from \eqref{eq:utoF} we also see
{\allowdisplaybreaks
\begin{align*}
\|u^{(k)}\|_{\textup{L}^1(\mathbb{S}^{2r-1})}
& \sim_r k^{-r} \int_{\mathbb{S}^{2r-1}} \int_{0}^{\infty} \prod_{i=1}^{r} F_k(t|\zeta_i|) \,\frac{\textup{d}t}{t} \,\textup{d}\sigma_{2r-1}(\zeta_1,\ldots,\zeta_r) \\
& \quad \big[\text{use parametrization $\Psi$ and recall the surface element \eqref{eq:PsiJacobian}}\big] \\
& \sim_r k^{-r} \int_{\mathbb{S}^{r-1}_{+}} \int_{0}^{\infty} \prod_{i=1}^{r} F_k(t\omega_i) \,\frac{\textup{d}t}{t} \,\omega_1\cdots\omega_r\,\textup{d}\sigma_{r-1}(\omega) \\
& \quad \big[\text{substitute $x_i=t\omega_i$ and denote $x=(x_1,\ldots,x_r)$}\big] \\
& = k^{-r} \int_{(0,\infty)^r} \Big(\prod_{i=1}^{r} F_k(x_i)\Big) \,\frac{x_1\cdots x_r}{|x|^{2r}} \,\textup{d}x \\
& \geq k^{-r} \int_{[1/R,R]^r} \Big(\prod_{i=1}^{r} F_k(x_i)\Big) \,\frac{x_1\cdots x_r}{|x|^{2r}} \,\textup{d}x \\
& \gtrsim_{r,R} k^{-r} \Big(\int_{1/R}^{R} F_k(s) \,\textup{d}s \Big)^r ,
\end{align*}
}
remembering that we chose $R>1$ in the discussion preceding \eqref{eq:Fclaim3}.
Finally, estimate \eqref{eq:Fclaim3} gives
\begin{equation}\label{eq:lmauxcl2}
\|u^{(k)}\|_{\textup{L}^1(\mathbb{S}^{2r-1})}
\gtrsim_r k^{-r}.
\end{equation}
Combining \eqref{eq:lmauxcl1} and \eqref{eq:lmauxcl2} we clearly get \eqref{eq:ubound}.
\end{proof}

Now the proof of part~\eqref{part:thmpartb} of Theorem~\ref{thm:main} can be completed using Theorem~\ref{thm:generallower} applied with $u=u^{(k)}$ introduced before and $v=v^{(k)}$ defined as
\[ v^{(k)} := \sum_{j=rk}^{\infty} \mathbbm{i}^{-j} \gamma_{n,j,n/p} Y_{j}^{(k)}
= \sum_{j=rk}^{\infty} \gamma_{n,j,n/p} \gamma_{n,j,0} \widetilde{Y}_{j}^{(k)}. \]
Recall \eqref{eq:finalsymbol}, i.e., we are taking the symbol $m_{\phi}^{k}$ to be a smoothened version of $\widetilde{m}^{(k)}$ in a way that
\begin{equation}\label{eq:easyest1}
\| \widetilde{m}^{(k)} - m_{\phi}^{k} \|_{\textup{L}^2(\mathbb{S}^{n-1})} 
\leq \sigma_{n-1}\big(\mathbb{S}^{n-1}\setminus\Psi(D_{\delta})\big)^{1/2}
= o^{\delta\to0+}_{n}(1) .
\end{equation}
Trivially,
\begin{equation}\label{eq:easyest2}
\|\widetilde{m}^{(k)}\|_{\textup{L}^2(\mathbb{S}^{n-1})} \sim_{n} 1 .
\end{equation}
On the one hand,
{\allowdisplaybreaks
\begin{align*}
\|v^{(k)}\|_{\textup{L}^2(\mathbb{S}^{2r-1})}^2 
& = \sum_{j=rk}^{\infty} 
\gamma_{2r,j,2r/p}^2 \gamma_{2r,j,0}^2 \big\|\widetilde{Y}_{j}^{(k)}\big\|_{\textup{L}^2(\mathbb{S}^{2r-1})}^2 
\sim_{r} \sum_{j=rk}^{\infty}  j^{4r/p-4r} \big\|\widetilde{Y}_{j}^{(k)}\big\|_{\textup{L}^2(\mathbb{S}^{2r-1})}^2 \\
& \leq (rk)^{4r/p-4r} \sum_{j=rk}^{\infty}  \big\|\widetilde{Y}_{j}^{(k)}\big\|_{\textup{L}^2(\mathbb{S}^{2r-1})}^2
= (rk)^{4r/p-4r} \| \widetilde{m}^{(k)}\|_{\textup{L}^2(\mathbb{S}^{2r-1})}^2 ,
\end{align*}
}
so, in combination with \eqref{eq:easyest2}, 
\begin{equation}\label{eq:easyest3}
\|v^{(k)}\|_{\textup{L}^2(\mathbb{S}^{n-1})} \lesssim_{n} k^{n/p-n} .
\end{equation}
On the other hand,
{\allowdisplaybreaks
\begin{align*}
\langle \widetilde{m}^{(k)}, v^{(k)} \rangle_{\textup{L}^2(\mathbb{S}^{2r-1})}
& = \sum_{j=rk}^{\infty} 
\big\langle \mathbbm{i}^{-j} \gamma_{2r,j,2r} Y_{j}^{(k)}, \mathbbm{i}^{-j} \gamma_{2r,j,2r/p} Y_{j}^{(k)}  \big\rangle_{\textup{L}^2(\mathbb{S}^{2r-1})} \\
& = \sum_{j=rk}^{\infty} 
\gamma_{2r,j,2r} \gamma_{2r,j,2r/p} \|Y_{j}^{(k)}\|_{\textup{L}^2(\mathbb{S}^{2r-1})}^2
\sim_{r} \sum_{j=rk}^{\infty} 
j^{2r/p} \|Y_{j}^{(k)}\|_{\textup{L}^2(\mathbb{S}^{2r-1})}^2 \\
& \geq (rk)^{2r/p} \sum_{j=rk}^{\infty}  \|Y_{j}^{(k)}\|_{\textup{L}^2(\mathbb{S}^{2r-1})}^2
= (rk)^{2r/p} \| u^{(k)}\|_{\textup{L}^2(\mathbb{S}^{2r-1})}^2 .
\end{align*}
}
From this and \eqref{eq:ubound} we get
\begin{equation}\label{eq:easyest4}
\langle \widetilde{m}^{(k)}, v^{(k)} \rangle_{\textup{L}^2(\mathbb{S}^{n-1})} \gtrsim_{n} k^{n/p-n}.
\end{equation}
As a consequence of \eqref{eq:easyest1}, \eqref{eq:easyest3}, and \eqref{eq:easyest4} we can choose $\delta>0$ sufficiently small depending on $n$ only, to achieve
\begin{align*}
\langle m_{\phi}^{k}, v^{(k)} \rangle_{\textup{L}^2(\mathbb{S}^{n-1})} 
& \geq \langle \widetilde{m}^{(k)}, v^{(k)} \rangle_{\textup{L}^2(\mathbb{S}^{n-1})} - \| \widetilde{m}^{(k)} - m_{\phi}^{k} \|_{\textup{L}^2(\mathbb{S}^{n-1})} \|v^{(k)}\|_{\textup{L}^2(\mathbb{S}^{n-1})} \\
& \gtrsim_{n} k^{n/p-n}
\end{align*}
for every positive integer $k$.
Combining this with \eqref{eq:ubound}, we can write
\[ \frac{|\langle m_{\phi}^{k}, v^{(k)} \rangle_{\textup{L}^2(\mathbb{S}^{n-1})}|}{\|u^{(k)}\|_{\textup{L}^q(\mathbb{S}^{n-1})}} 
\gtrsim_{n} \frac{|\langle m_{\phi}^{k}, v^{(k)} \rangle_{\textup{L}^2(\mathbb{S}^{n-1})}|}{\|u^{(k)}\|_{\textup{L}^{\infty}(\mathbb{S}^{n-1})}} 
\gtrsim_{n} \frac{k^{n/p-n}}{k^{-n/2}} = k^{n(1/p-1/2)}. \]
Moreover,
\[ \sum_{j=rk}^{\infty} j^{n} \|Y_{j}^{(k)}\|_{\textup{L}^2(\mathbb{S}^{n-1})}^2 
\sim_n \sum_{j=rk}^{\infty} \big\|\widetilde{Y}_{j}^{(k)}\big\|_{\textup{L}^2(\mathbb{S}^{n-1})}^2 
= \| \widetilde{m}^{(k)}\|_{\textup{L}^2(\mathbb{S}^{n-1})}^2 < \infty, \]
so that Remark~\ref{rem:on_convergence} applies and it guarantees $\textup{L}^q$ convergence of the series \eqref{eq:seriesdefu} on $\mathbb{S}^{n-1}$.
We can now apply Theorem~\ref{thm:generallower}; estimates \eqref{eq:propmainlow} and \eqref{eq:propweaklow} respectively give \eqref{eq:mainlower} and \eqref{eq:weaklower} for every positive integer $k$.


\section{Proof of Theorem~\ref{thm:cos}}
\label{sec:proofofcos}
Take $\lambda\geq1$.
Decomposition of the symbol $m_{\cos}^{\lambda}$ into spherical harmonics is simply its Fourier series expansion,
\begin{equation}\label{eq:cosproof0}
m_{\cos}^{\lambda}(e^{\mathbbm{i}\varphi}) = e^{\mathbbm{i}\lambda\cos\varphi} = J_{0}(\lambda) + 2\sum_{j=1}^{\infty} \mathbbm{i}^j J_{j}(\lambda) \cos j\varphi,
\end{equation}
where $J_j$ are again Bessel functions of the first kind and we used formula \eqref{eq:Besselintegral}.
Taking the real part of the identity \eqref{eq:cosproof0} we get
\begin{equation}\label{eq:cosproof1}
\cos(\lambda\cos\varphi) = J_{0}(\lambda) + 2\sum_{l=1}^{\infty} (-1)^l J_{2l}(\lambda) \cos 2l\varphi
\end{equation}
and then changing $\varphi\rightarrow\varphi-\pi/2$ we also obtain
\begin{equation}\label{eq:cosproof2}
\cos(\lambda\sin\varphi) = J_{0}(\lambda) + 2\sum_{l=1}^{\infty} J_{2l}(\lambda) \cos 2l\varphi.
\end{equation}

The following technical lemma deals with sums of Bessel functions with even-integer orders, and it contains all information we need about the Fourier coefficients of $m_{\cos}^{\lambda}$.

\begin{lemma}
We have
\begin{align}
\sum_{l=1}^{\infty} l^2 J_{2l}(\lambda)^2 \gtrsim \lambda^2, \label{eq:cosproof4} \\
\sum_{l=1}^{\infty} l^4 J_{2l}(\lambda)^2 \lesssim \lambda^4, \label{eq:cosproof7}
\end{align}
and for every $p\in[1,2]$ we have
\begin{equation}\label{eq:cosproof6}
\sum_{l=1}^{\infty} l^{2/p-1} J_{2l}(\lambda)^2 \gtrsim \lambda^{2/p-1}.
\end{equation}
\end{lemma}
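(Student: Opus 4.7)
The strategy is to read off all three estimates from Parseval's identity applied to appropriate derivatives of $\cos(\lambda\sin\varphi)$. Term-by-term differentiation of \eqref{eq:cosproof2} yields
\[ \partial_\varphi\cos(\lambda\sin\varphi) = -\lambda\cos\varphi\sin(\lambda\sin\varphi) = -4\sum_{l\geq1} l J_{2l}(\lambda)\sin 2l\varphi \]
and
\[ \partial_\varphi^2\cos(\lambda\sin\varphi) = \lambda\sin\varphi\sin(\lambda\sin\varphi) - \lambda^2\cos^2\varphi\cos(\lambda\sin\varphi) = -8\sum_{l\geq1} l^2 J_{2l}(\lambda)\cos 2l\varphi, \]
so Parseval converts \eqref{eq:cosproof4} and \eqref{eq:cosproof7} into estimates on explicit trigonometric integrals, which are then analyzed directly.

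For \eqref{eq:cosproof4} I would compute the integral in closed form. Using $\sin^2(\lambda\sin\varphi) = (1 - \cos(2\lambda\sin\varphi))/2$ together with the two evaluations $\int_0^{2\pi}\cos(2\lambda\sin\varphi)\,d\varphi = 2\pi J_0(2\lambda)$ and $\int_0^{2\pi}\cos(2\varphi)\cos(2\lambda\sin\varphi)\,d\varphi = 2\pi J_2(2\lambda)$ (direct consequences of \eqref{eq:cosproof2}), one arrives at
\[ \sum_{l=1}^{\infty} l^2 J_{2l}(\lambda)^2 = \frac{\lambda^2}{32}\Big(1 - \frac{J_1(2\lambda)}{\lambda}\Big), \]
where the Bessel recurrence $J_0(x) + J_2(x) = 2J_1(x)/x$ is applied at $x = 2\lambda$. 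Since $\|J_1\|_{\textup{L}^{\infty}(\mathbb{R})} < 1$, the factor $1 - J_1(2\lambda)/\lambda$ is bounded below by a positive absolute constant whenever $\lambda\geq 1$, giving \eqref{eq:cosproof4}. For \eqref{eq:cosproof7} one only needs the crude pointwise bound $|\partial_\varphi^2\cos(\lambda\sin\varphi)| \leq \lambda + \lambda^2 \leq 2\lambda^2$ for $\lambda\geq 1$, which Parseval converts immediately into $\sum l^4 J_{2l}(\lambda)^2 \leq \lambda^4/8$.

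For \eqref{eq:cosproof6} I would derive the bound from \eqref{eq:cosproof4} and \eqref{eq:cosproof7} by a truncation-interpolation argument. Setting $\alpha := 2/p - 1 \in [0, 1]$ and choosing a cutoff $M = C_0\lambda$, the elementary inequality $l^\alpha \geq M^{\alpha-2} l^2$ (valid for $1 \leq l \leq M$ and $\alpha \leq 2$) gives
\[ \sum_{l=1}^{\infty} l^{\alpha} J_{2l}(\lambda)^2 \geq M^{\alpha-2}\sum_{l\leq M} l^2 J_{2l}(\lambda)^2. \]
The tail is controlled by the Chebyshev-type estimate $\sum_{l>M} l^2 J_{2l}(\lambda)^2 \leq M^{-2}\sum_{l}l^4 J_{2l}(\lambda)^2 \lesssim \lambda^2 / C_0^{2}$. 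Choosing $C_0$ large enough, depending only on the absolute constants from \eqref{eq:cosproof4} and \eqref{eq:cosproof7}, makes the tail at most half of the full sum in \eqref{eq:cosproof4}, so $\sum_{l\leq M} l^2 J_{2l}(\lambda)^2 \gtrsim \lambda^2$. Consequently $\sum l^\alpha J_{2l}(\lambda)^2 \gtrsim C_0^{\alpha-2}\lambda^\alpha$, and since $\alpha\in[0,1]$ keeps $C_0^{\alpha-2}$ bounded below, this yields the desired lower bound uniformly in $p\in[1,2]$.

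The main obstacle is technical rather than conceptual: extracting the closed form in \eqref{eq:cosproof4} hinges on combining $2\cos^2\varphi = 1 + \cos 2\varphi$ with two separate uses of Bessel's integral formula \eqref{eq:Besselintegral} and with the Bessel recurrence, and one must keep track of exactly which Fourier coefficients survive. The positivity of $1 - J_1(2\lambda)/\lambda$ across all $\lambda \geq 1$ reduces to the classical numerical fact $\max|J_1|<1$, without which one would need a separate treatment for the smallest values of $\lambda$; the same issue could alternatively be handled by continuity-compactness on $[1,\lambda_0]$ combined with the asymptotic $J_1(2\lambda) = O(\lambda^{-1/2})$, but the direct bound on $\|J_1\|_\infty$ keeps the argument uniform.
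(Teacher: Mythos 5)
Your proof is correct, and its overall architecture is the same as the paper's: differentiate the Fourier--Bessel expansion term by term once and twice, apply Parseval, and then deduce \eqref{eq:cosproof6} from the two moment bounds \eqref{eq:cosproof4} and \eqref{eq:cosproof7}. Two of your sub-arguments differ from the paper's, though. For \eqref{eq:cosproof4} you evaluate the Parseval integral in closed form, $\sum_l l^2 J_{2l}(\lambda)^2=\tfrac{\lambda^2}{32}\bigl(1-J_1(2\lambda)/\lambda\bigr)$ (the constant checks out), and then invoke the classical numerical fact $\|J_1\|_{\textup{L}^\infty}<1$; the paper instead avoids any Bessel identities by bounding $\sin^2\varphi\ge\sin^3\varphi$ on $[0,\pi]$ and substituting $s=\cos\varphi$, which reduces everything to an elementary polynomial-times-$\sin^2(\lambda s)$ integral. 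Your route is slicker but imports an external fact about $J_1$ (which you correctly flag and for which you offer a compactness fallback); the paper's is fully self-contained. For \eqref{eq:cosproof6} you use a truncation at $l\sim C_0\lambda$ with a Chebyshev tail bound, whereas the paper applies a single three-term H\"older inequality $\bigl(\sum l^2 J_{2l}^2\bigr)^{5/2-1/p}\le\bigl(\sum l^{2/p-1}J_{2l}^2\bigr)\bigl(\sum l^4 J_{2l}^2\bigr)^{3/2-1/p}$; both yield the lower bound uniformly in $p\in[1,2]$, and your version makes the uniformity in $p$ perhaps more transparent at the cost of tracking the cutoff constant $C_0^{\alpha-2}$.
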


\begin{proof}
Since $m_{\cos}^{\lambda}$ is $\textup{C}^\infty$ on $\mathbb{S}^1$, its coefficients $(J_{j}(\lambda))_{j=1}^{\infty}$ decay faster than $O_{\lambda}^{j\to\infty}(j^{-A})$ for every $A>0$. 
We can differentiate the series in \eqref{eq:cosproof1} term-by-term once or twice with respect to $\varphi$, which gives us two more identities:
\begin{align}
\lambda\sin(\lambda\cos\varphi) \sin\varphi & = 4 \sum_{l=1}^{\infty} (-1)^{l-1} l J_{2l}(\lambda) \sin 2l\varphi, \label{eq:cosproof3} \\
-\lambda^2\cos(\lambda\cos\varphi) \sin^2\varphi + \lambda \sin(\lambda\cos\varphi) \cos\varphi
& = 8 \sum_{l=1}^{\infty} (-1)^{l-1} l^2 J_{2l}(\lambda) \cos 2l\varphi. \label{eq:cosproof8}
\end{align}

On the one hand, an application of Parseval's formula to the function on the left hand side of \eqref{eq:cosproof3} yields 
{\allowdisplaybreaks
\begin{align*}
\sum_{l=1}^{\infty} l^2 J_{2l}(\lambda)^2 
& \sim \lambda^2 \int_{-\pi}^{\pi} \sin^2(\lambda\cos\varphi) \sin^2\varphi \,\textup{d}\varphi \\
& \geq 2\lambda^2 \int_{0}^{\pi} \sin^2(\lambda\cos\varphi) \sin^3\varphi \,\textup{d}\varphi \\
& \quad \big[\text{substitute $s=\cos\varphi$}\big] \\
& = 2\lambda^2 \int_{-1}^{1} (1-s^2) \sin^2 \lambda s \,\textup{d}s
= 2\lambda^2 \Big(\frac{2}{3} + \frac{\cos2\lambda}{2\lambda^2} - \frac{\sin2\lambda}{4\lambda^3}\Big),
\end{align*}
}
which confirms \eqref{eq:cosproof4}.
On the other hand, Parseval's formula applied to \eqref{eq:cosproof8} clearly gives \eqref{eq:cosproof7}.
Finally, for any fixed $p\in[1,2]$, by H\"{o}lder's inequality for sums we have
\[ \Big(\sum_{l=1}^{\infty} l^2 J_{2l}(\lambda)^2\Big)^{5/2-1/p} \leq \Big(\sum_{l=1}^{\infty} l^{2/p-1} J_{2l}(\lambda)^2\Big) \Big(\sum_{l=1}^{\infty} l^4 J_{2l}(\lambda)^2\Big)^{3/2-1/p}, \]
which, in combination with \eqref{eq:cosproof4} and \eqref{eq:cosproof7}, gives \eqref{eq:cosproof6}.
\end{proof}

Let us return to the proof of Theorem~\ref{thm:cos}.
We are about to apply Theorem~\ref{thm:generallower} with the function
\[ u^{(\lambda)}(e^{\mathbbm{i}\varphi}) := \cos(\lambda\sin\varphi) - J_{0}(\lambda), \]
recalling that its Fourier expansion can be seen from \eqref{eq:cosproof2}.
The corresponding function $v^{(\lambda)}$ is then given by
\[ v^{(\lambda)}(e^{\mathbbm{i}\varphi}) := 2 \sum_{l=1}^{\infty} (-1)^l \gamma_{2,2l,2/p} J_{2l}(\lambda) \cos 2l\varphi. \]
Clearly, 
\[ \|u^{(\lambda)}\|_{\textup{L}^q(\mathbb{S}^1)} \lesssim \|u^{(\lambda)}\|_{\textup{L}^{\infty}(\mathbb{S}^1)} \leq 2 , \]
while \eqref{eq:gammaasympt} and \eqref{eq:cosproof6} give
\[ \langle m_{\cos}^{\lambda}, v^{(\lambda)}\rangle_{\textup{L}^2(\mathbb{S}^1)} = 4\pi \sum_{l=1}^{\infty} \gamma_{2,2l,2/p} J_{2l}(\lambda)^2 \gtrsim \lambda^{2/p-1}. \]
Since
\[ \frac{|\langle m_{\cos}^{\lambda}, v^{(\lambda)} \rangle_{\textup{L}^2(\mathbb{S}^{1})}|}{\|u^{(\lambda)}\|_{\textup{L}^q(\mathbb{S}^{1})}} 
\gtrsim \lambda^{2/p-1} = \lambda^{2(1/p-1/2)} , \]
estimates \eqref{eq:propmainlow} and \eqref{eq:propweaklow} give the lower bounds in \eqref{eq:cosstrong} and \eqref{eq:cosweak}, respectively, for every positive integer $k$. 

The upper bounds in \eqref{eq:cosstrong} and \eqref{eq:cosweak} are just special cases of \eqref{eq:mainupper} and \eqref{eq:weakupper} in the part~\eqref{part:thmparta} of Theorem~\ref{thm:main}.


\section{Closing remarks}

\begin{remark}\label{rem:smoothing}
Note that the equality \eqref{eq:finalsymbol} only holds on the major cone in $\mathbb{R}^{n}=\mathbb{R}^{2r}$ and changing the symbol on a set of small measure can drastically change the Fourier multiplier norm.
The fact that $m_{\phi}^{k}$ does not exactly split into a tensor product of two-dimensional symbols prevents us from plugging in elementary tensors for ``almost extremizing'' functions. 

Indeed, let us try to test our operator $T_{\phi}^{k}$ on $f=f_{p,r,\varepsilon}$ and $g=g_{k,p,r,\varepsilon}$ given as $r$-fold elementary tensors
\begin{align*}
f(x_1,x_2,\ldots,x_{2r-1},x_{2r}) & := f_2(x_1,x_2) \cdots f_2(x_{2r-1},x_{2r}) , \\
g(x_1,x_2,\ldots,x_{2r-1},x_{2r}) & := g_2(x_1,x_2) \cdots g_2(x_{2r-1},x_{2r}) .
\end{align*}
Here $f_2$ and $g_2$ depend on $k,p,\varepsilon$ and they are chosen as in Lemma~\ref{lm:lowercomp}, i.e., such that 
\begin{align*}
\Big\| f_{2}(x) - |x|^{-2/p} \mathbbm{1}_{\{\varepsilon\leq |x|\leq1/\varepsilon\}}(x) \Big\|_{\textup{L}_x^p(\mathbb{R}^2)} & \lesssim_{p} 1, \\
\Big\| \widehat{f_2}(\xi) - \gamma_{2,0,2/q} |\xi|^{-2/q} \mathbbm{1}_{\{\varepsilon\leq |\xi|\leq1/\varepsilon\}}(\xi) \Big\|_{\textup{L}_{\xi}^q(\mathbb{R}^2)} & \lesssim_{p} 1, \\
\Big\| g_{2}(x) - \Big(\frac{x_1+\mathbbm{i}x_2}{|x_1+\mathbbm{i}x_2|}\Big)^k |x|^{-2/q} \mathbbm{1}_{\{\varepsilon\leq |x|\leq1/\varepsilon\}}(x) \Big\|_{\textup{L}_x^q(\mathbb{R}^2)} & \lesssim_{k,p} 1, \\
\Big\| \widehat{g_2}(\xi) - \mathbbm{i}^{-k} \gamma_{2,k,2/p} \Big(\frac{\xi_1+\mathbbm{i}\xi_2}{|\xi_1+\mathbbm{i}\xi_2|}\Big)^k |\xi|^{-2/p} \mathbbm{1}_{\{\varepsilon\leq |\xi|\leq1/\varepsilon\}}(\xi) \Big\|_{\textup{L}_{\xi}^p(\mathbb{R}^2)} & \lesssim_{k,p} 1 .
\end{align*}
Then we have
\[ \|f\|_{\textup{L}^p(\mathbb{R}^n)} \|g\|_{\textup{L}^q(\mathbb{R}^n)} = \Big(4\pi\log\frac{1}{\varepsilon}\Big)^r + o_{k,p,r}^{\varepsilon\to0+}\Big(\Big(\log\frac{1}{\varepsilon}\Big)^r\Big) \]
and
{\allowdisplaybreaks
\begin{align}
& \langle T_{\phi}^k f, g\rangle_{\textup{L}^2(\mathbb{R}^n)}
= \big\langle m_{\phi}^k \widehat{f}, \widehat{g} \big\rangle_{\textup{L}^2(\mathbb{R}^n)} \nonumber \\
& = \big\langle m_{\phi}^k(\xi) \,\widehat{f}_2(\xi_1,\xi_2) \cdots \widehat{f}_2(\xi_{2r-1},\xi_{2r}), \,\widehat{g}_2(\xi_1,\xi_2) \cdots \widehat{g}_2(\xi_{2r-1},\xi_{2r}) \big\rangle_{\textup{L}^2_{\xi}(\mathbb{R}^n)} \nonumber \\
& = \mathbbm{i}^{rk} \gamma_{2,k,2/p}^r \int_{\mathbb{R}^n} m_{\phi}^k(\xi) \bigg( \prod_{j=1}^r \Big(\frac{\overline{\xi_{2j-1}+\mathbbm{i}\xi_{2j}}}{|\xi_{2j-1}+\mathbbm{i}\xi_{2j}|}\Big)^k |(\xi_{2j-1},\xi_{2j}))|^{-2} \nonumber \\[-2mm]
& \qquad\qquad\qquad\qquad\qquad\qquad\ \  
\mathbbm{1}_{\{\varepsilon\leq|(\xi_{2j-1},\xi_{2j})|\leq1/\varepsilon\}}(\xi_{2j-1},\xi_{2j}) \,\textup{d}\xi_{2j-1}\,\textup{d}\xi_{2j} \bigg) \label{eq:thelowintegral} \\
& \quad + o_{n,k,p}^{\varepsilon\to0+}\Big(\Big(\log\frac{1}{\varepsilon}\Big)^r\Big) \nonumber
\end{align}
}
The integral \eqref{eq:thelowintegral} restricted to the major cone $M_\delta$ equals 
\begin{align*}
& \int_{M_\delta} \prod_{j=1}^r |(\xi_{2j-1},\xi_{2j}))|^{-2} \mathbbm{1}_{\{\varepsilon\leq|(\xi_{2j-1},\xi_{2j})|\leq1/\varepsilon\}}(\xi_{2j-1},\xi_{2j}) \,\textup{d}\xi_{2j-1}\,\textup{d}\xi_{2j} ,
\end{align*}
but, for a fixed $\delta$, this grows only like $\log(1/\varepsilon)$ and not like $(\log(1/\varepsilon))^r$, as it should. The major cone $M_\delta$ is not so much ``major'' in this matter.

Thus, we are better off sticking to functions $f$ and $g$ with more radial symmetry, just as we did before. This is also philosophically in line with the fact that the symbol $m_{\phi}^{k}$ is homogeneous.
\end{remark}

\begin{remark}
The auxiliary function $\widetilde{m}^{(k)}$ used in Subsection~\ref{subsec:even} has a quite complicated expansion into spherical harmonics despite its relatively simple defining formula.
In $n=4$ dimensions this expansion can still be computed explicitly. For simplicity suppose that $k\geq2$ is even. Then
\begin{equation}\label{eq:mtilde4d}
\widetilde{m}^{(k)} = \sum_{\substack{j\geq2k\\ j\text{ divisible by }4}} \widetilde{Y}_j^{(k)} ,
\end{equation}
where
\begin{align*}
\widetilde{Y}_j^{(k)}(\zeta_1,\zeta_2) = & \frac{\binom{j}{j/2}\binom{j/2}{j/4-k/2}(j+1)k}{\binom{j/2}{j/4}\binom{j}{j/2-k}(j/2)(j/2+1)} \\ & \times \zeta_1^k \zeta_2^k \sum_{l=0}^{j/2-k} (-1)^{l} \dbinom{j/2}{j/2-k-l} \dbinom{j/2}{l} |\zeta_1|^{j-2k-2l} |\zeta_2|^{2l}.
\end{align*}
Even though this formula is explicit, it still does not reveal how to compute the associated auxiliary function $u^{(k)}$, which is the work we were doing in the very technical proof of Lemma~\ref{lm:computeu}. However, one can argue that $u^{(k)}$ is ``very close'' to a constant multiple of
\[ \mathbb{C}^2\supset\mathbb{S}^3 \ni (\zeta_1,\zeta_2) \mapsto \Big(\frac{\zeta_1}{|\zeta_1|}\Big)^{k}\Big(\frac{\zeta_2}{|\zeta_2|}\Big)^{k} |\zeta_1|^2 |\zeta_2|^2, \]
and the latter function could have been used for the same purpose, leading to a slightly shorter proof in four dimensions.

In higher dimensions computing the expansion of $\widetilde{m}^{(k)}$ into spherical harmonics seemed impossible to us, or at least not possible in any explicit or practical way.
Also note a pleasant property of \eqref{eq:mtilde4d}: only its every fourth term is nonzero. This property is not retained in higher dimensions, where only every other term in the corresponding expansion is equal to zero. 
Since the passage from $u$ to $v$ in Theorem~\ref{thm:generallower} requires inserting the coefficients $\mathbbm{i}^{-j}\gamma_{n,j,n/p}$, we see that we are, in fact, also inserting $\pm$ signs into the series $\sum_{j=0}^{\infty}Y_j$, which is a very subtle operation.
\end{remark}


\section*{Acknowledgements}
This work was supported in part by the \emph{Croatian Science Foundation} project UIP-2017-05-4129 (MUNHANAP).
We are grateful to Vladimir Maz'ya for his interest in this paper and for suggesting a few references.
We are also grateful to Ton\v{c}i Crmari\'{c} for a reference to the original appearance of Bochner's formula. 


\bibliography{sing_int_asym}{}
\bibliographystyle{plain}

\end{document}